\documentclass[11pt]{article} 
\usepackage{graphicx} 

\usepackage{amssymb}
\usepackage{amsmath}
\usepackage{setspace}
\usepackage{xcolor}

\usepackage{subcaption}

\usepackage{hyperref}

\newcommand{\comm}[1]{}

\usepackage[fleqn,tbtags]{mathtools}

\usepackage{amsthm}
\newtheorem{thm}{Theorem}[section]

\newtheorem{cor}[thm]{Corollary}
\newtheorem{lemma}[thm]{Lemma}
\newtheorem{prop}[thm]{Proposition}

\newtheorem{defn}[thm]{Definition}
\newtheorem{remark}[thm]{Remark}
\newtheorem{example}[thm]{Example}

\numberwithin{equation}{section}
\counterwithin{figure}{section}

\usepackage{tikz}
\usepackage{pgfplots}

\usepackage{enumitem}

\def\Xint#1{\mathchoice
{\XXint\displaystyle\textstyle{#1}}%
{\XXint\textstyle\scriptstyle{#1}}%
{\XXint\scriptstyle\scriptscriptstyle{#1}}%
{\XXint\scriptscriptstyle\scriptscriptstyle{#1}}%
   \!\int}
\def\XXint#1#2#3{{\setbox0=\hbox{$#1{#2#3}{\int}$}
\vcenter{\hbox{$#2#3$}}\kern-.5\wd0}}

\def\dashint{\Xint-}

\DeclareMathOperator{\spt}{spt}
\DeclareMathOperator{\proj}{proj}

\DeclareMathOperator{\Id}{Id}
\DeclareMathOperator{\conv}{conv}

\title{Equilibria of the Pressureless Euler System in Dimension $1$}
\author{Nicholas Biglin, Columbia University, nb3101@columbia.edu \\ Joseph Crachiola, Wayne State University, ho9872@wayne.edu\\ Thomas Kunz, University of Minnesota, kunz0119@umn.edu \\Omkar Maralappanavar, University of Connecticut, omkar.maralappanavar@uconn.edu}
\date{}

\begin{document}

\maketitle

\singlespacing
\begin{abstract}
    The pressureless Euler equations in dimension $1$ describe the evolution of a distribution of mass over the real line. We are interested in sticky particles solutions: distributional solutions in which mass that collides sticks together for all time. Given initial data, a projection formula is known to give a sticky particles solution. We obtain a necessary and sufficient condition for this solution to go to equilibrium based only on the initial condition. Furthermore, we give a precise characterization of the equilibrium, as well as a formula for the time of collapse.  
\end{abstract}

\doublespacing

\section{Introduction and Overview}\label{Overview}
    The pressureless Euler system was originally proposed by Zeldovich \cite{Zeldovich} to model the formation of large structures in the universe. In one spatial dimension, it is given by 
    \begin{align}
        \begin{cases}\label{PE}\tag{PE}
        \partial_t \rho + \partial_y (\rho v) = 0 
        \\
        \partial_t (\rho v) + \partial_y (\rho v^2) = 0,
        \end{cases}
    \end{align}
    where $\rho$ represents a distribution of mass on the line and $v$ gives the velocity of the mass at any point. The first equation, known as the continuity equation, prescribes local conservation of mass, while the second equation prescribes conservation of momentum. $\rho_t$ (denoting $\rho$ at time $t$) is to be understood as a probability measure on $\mathbb{R}$ of finite second moment.\footnote{While we do not consider it here, it is also natural to study \eqref{PE} with a sticky boundary condition, see \cite{Tudorascu2023}.}
    Thus, the equations and initial conditions are to be understood in the sense of distributions. We investigate Sticky Particle Solutions (SPS), solutions which imposes that mass sticks together at collisions.\footnote{The sticky condition can be stated precisely as follows: for all $t\geq s\geq0$, $N_s(a)=N_s(b)$ implies that $N_t(a)=N_t(b)$, where $(N_t)_\#\mathcal{L}=\rho_t$. (The optimal map $N_t$ is defined in section \ref{OptimalTransport-section}.)}

    The analysis of the pressureless Euler system was first undertaken in \cite{ERS}, \cite{BrenGren}. Since then an important development is the projection formula in \cite{Natile-Savare} (a key tool in this paper, see Section \ref{projection-section}). A discussion is in order regarding well-posedness. The question of existence was settled in \cite{Tudorascu-2008}, \cite{Hynd2019}, and \cite{hynd2020trajectorymappressurelesseuler}. \cite{Suder-Tudorascu} is (as of present) the most general uniqueness result. Specifically, SPS and SCL solutions discussed in \cite{BrenGren}, \cite{Tudorascu-2008}, are linked to a result in \cite{Huang-Wang}, obtaining that SPS are unique in the case that $v_0$ is $L^\infty(\rho_0)$ and right-continuous. This involved strengthening the (a.e.) Oleinik condition (well-established in the literature discussed here) to an \textit{everywhere} Oleinik condition. \cite{Natile-Savare} also gives results on the stability of solutions. This paper's most immediate relation is \cite{Hynd-Tudorascu}, which shows that an asymptotic limit $\rho_\infty$ exists in the case that $\rho$ remains within a compact set for all time. Such an assumption is not necessary for these results. 
    
    The right side of the momentum equation may be replaced by a potential, yielding, most notably, the attractive and repulsive Euler-Poisson equations. In either, the only equilibrium is a single point mass (which is always achieved for compactly supported solutions in the attractive case as can be seen in \cite{BLPTW}). Equilibrium in the case of \eqref{PE}, however, appears more rich, as any distribution on $\mathbb{R}$ can be an equilibrium solution. This raises three questions about sticky particles solutions to the preasureless Euler problem, which we will answer in this paper:  
    \begin{enumerate}
        \item Under what conditions does a SPS go to an equilibrium? 
        \item In the case that a SPS goes to equilibrium, what distribution does it approach?  
        \item In the case that a SPS goes to equilibrium, what is the time of collapse? 
    \end{enumerate}
    All three of these questions can be answered completely using only the initial conditions $(\rho_0,v_0)$. Of course, further exploration of the other cases of the Pressureless Euler-Poisson equations is still in order. As for the one dimensional case, the attractive equations can be studied using similar techniques to this paper, (such as the projection formula) as can be seen in \cite{BLPTW}. However, the repulsive equations exhibit more pathological behavior, resisting such methods (see our recent paper, \cite{Repulsive-Paper}). Thus, more analysis is needed in that case; in particular, we still lack a method to predict whether a solution will go to equilibrium based on the initial conditions.

\section{Preliminaries}
\subsection{The Wasserstein Distance and Optimal Transport}\label{OptimalTransport-section}
    Throughout this paper, we denote the Lebesgue measure on $(0,1)$ by $\mathcal{L}$. We denote the class of probability measures with finite $p$-moment as follows:
    \begin{align*}
        \mathcal{P}_p(\mathbb{R})\coloneq\big\{\mu \text{ a Borel probability on }\mathbb{R}\ \big|\ \int y^p\mu(dy)<\infty\big\}.
    \end{align*}
    As seen in \cite{Villani}, for a Borel map $S:\mathbb{R}\to\mathbb{R}$ and probability $\mu$ on $\mathbb{R}$, the \textbf{push-forward} of $\mu$ by $S$, denoted $S_\#\mu$, is the (unique) Borel probability such that for all $\varphi\in C_c(\mathbb{R})$,
    \begin{align*}
        \int\varphi(y)S_\#\mu(dy)=\int\varphi(S(x))\mu(dx). 
    \end{align*}
    To precisely describe convergence to equilibrium, we will need the Wasserstein metric, which describes a distance between probabilities on $\mathbb{R}$. The precise definition of the \textbf{$p$-Wasserstein distance} between measures $\mu_1,\mu_2\in\mathcal{P}_p(\mathbb{R})$ may be found in \cite{Villani}. 
    Here, we will compute the $p$-Wasserstein distance using this important fact: if $\mu_1$ and $\mu_2$ have \textbf{optimal maps} $N_1, N_2\in L^p(0,1)$, (the unique, right-continuous functions on $(0,1)$ with ${N_i}_\#\mathcal{L}=\mu_i$) then  
    \begin{align*}
        W_p(\mu_1, \mu_2)=\Big(\int_0^1 \big|N_1(x)-N_2(x)\big|^p \, dx\Big)^{1/p}=\|N_1-N_2\|_{L^p(0,1)}.
    \end{align*}
\subsection{The Projection Formula}\label{projection-section}
    In what follows, $\proj_{\mathcal K}:L^2(0,1)\to \mathcal{K}$ is the projection onto $\mathcal{K}$, the convex cone of nondecreasing functions on $(0,1)$. Given $f\in L^2$, it is calculated (as seen in Theorem 3.1, \cite{Natile-Savare}) by 
    \begin{align}\label{projection-computation}
        \proj_\mathcal{K}f=\frac{d^+}{dx}F^{**}(x)=\frac{d^+}{dx}\conv\Big(\int_0^x f(w) \, dw\Big)
    \end{align}
    where $F^{**}\coloneq\conv(F)$ is the convex envelope (the greatest convex function bounded above) of $F(x):=\int_0^x f(w)dw$, the primitive of $f$, and $\frac{d^+}{dx}$ is the right derivative. The following are among several useful properties, which are in fact true of any projection onto a convex cone $\mathcal{C}$ in a Hilbert space $H$: (see Lemma 3 in \cite{Ingram-Marsh}) for $f,g\in H$, 
    \begin{enumerate}[label=\textup{(\roman*)}]
        \item \hypertarget{PositiveHomogeneity}{\textup{(Positive Homogeneity)}} if $\alpha\geq0$, $\proj_\mathcal{C}(\alpha f)=\alpha\proj_\mathcal{C}f$;
        \item \hypertarget{Contraction}{\textup{(Contraction)}} $\|\proj_\mathcal{C}f-\proj_\mathcal{C}g\|_H\leq\|f-g\|_H$.
    \end{enumerate}
    
    The following theorem, the \textbf{projection formula}, is a key result from \cite{Natile-Savare}, which we will make extensive use of. 
    \begin{thm}[Natile-Savaré]\label{ProjectionFormula}
        {Let $\rho_0\in\mathcal{P}_2(\mathbb{R})$, $v_0\in L^2(\rho_0)$. The time evolution of a SPS $(\rho, v)$ to \eqref{PE} with initial conditions $(\rho_0, v_0)$, $N_{0\#}\mathcal{L}=\rho_0$, $V_0\coloneq v_0\circ N_0$, is captured by 
        \begin{align*}
            N_t=\proj_{\mathcal{K}}(N_0+tV_0) \quad \text{and} \quad V_t=v(t,\cdot)\circ N_t \quad \text{where} \quad {N_t}_\#\mathcal{L}=\rho_t \quad\text{and}\quad V_t=\frac{d^+}{dt}N_t. 
        \end{align*}}
    \end{thm}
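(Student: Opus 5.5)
The plan is to take $N_t:=\proj_{\mathcal K}(N_0+tV_0)$ as the definition of the candidate solution. I would then verify three things: that it is Lipschitz in time with a right derivative $V_t$; that $\rho_t:=(N_t)_\#\mathcal L$ together with $v(t,\cdot)$ defined by $V_t=v(t,\cdot)\circ N_t$ solves \eqref{PE} in the sense of distributions; and that the solution is sticky. Lipschitz continuity is immediate from the \hyperlink{Contraction}{Contraction} property. Since $N_0\in\mathcal K$ is its own projection,
\begin{align*}
\|N_t-N_s\|_{L^2}\le |t-s|\,\|V_0\|_{L^2}=|t-s|\,\|v_0\|_{L^2(\rho_0)}.
\end{align*}

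The structural input comes from \eqref{projection-computation}. Write $F_t(x)=\int_0^x(N_0+tV_0)\,dw$. The map $N_t$ is constant exactly on the maximal open intervals $I$ where $F_t^{**}<F_t$. On such an interval $F_t^{**}$ is affine and agrees with $F_t$ at the endpoints. This gives
\begin{align*}
N_t|_I=\frac{1}{|I|}\int_I (N_0+tV_0)\,dw.
\end{align*}
Off these intervals, $N_t=N_0+tV_0$ almost everywhere.

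The key lemma I would prove is a ``restart'' property: for $0\le s\le t$,
\begin{align*}
N_t=\proj_{\mathcal K}\big(N_s+(t-s)\bar V_s\big),
\end{align*}
where $\bar V_s$ is the conditional expectation of $V_0$ on the flat intervals of $N_s$, i.e.\ $\bar V_s$ equals the average of $V_0$ on each such $I$ and equals $V_0$ elsewhere. The proof has two parts.
\begin{itemize}
\item[(a)] Flat intervals only grow in $t$. If $F_s^{**}<F_s$ on $I$, then $F_t^{**}<F_t$ on $I$ as well; this is exactly the sticky condition. I expect to get this from $F_t=F_s+(t-s)\int_0^x V_0$ together with the contact-set structure.
\item[(b)] Replacing $N_0+tV_0$ by its averages on the flat intervals of $N_s$ does not change the convex envelope. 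Averaging over an interval where the primitive is already above its envelope leaves the endpoint values of the primitive unchanged, and only modifies it inside the region where the envelope is strictly below.
\end{itemize}

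Given the restart lemma, the right derivative exists and equals
\begin{align*}
V_s=\lim_{h\downarrow0}\frac{N_{s+h}-N_s}{h}=\frac{d^+}{dh}\Big|_{h=0}\proj_{\mathcal K}(N_s+h\bar V_s).
\end{align*}
This is the directional derivative of the projection at a point of $\mathcal K$, namely the projection of $\bar V_s$ onto the tangent cone of $\mathcal K$ at $N_s$. That tangent cone consists of functions that are nondecreasing on each flat interval of $N_s$. I would then argue that $V_s$ is constant on level sets of $N_s$, so that $v(s,\cdot)$ is well defined. I would also argue that on each level set, $V_s$ has the same integral as $V_0$, i.e.
\begin{align*}
\int \psi(N_s)\,V_s\,dx=\int\psi(N_s)\,V_0\,dx
\end{align*}
for bounded Borel $\psi$; this is conservation of momentum inside clusters.

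With these facts the equations follow by testing against $\varphi\in C_c^\infty$. For the continuity equation, $\frac{d^+}{dt}\int\varphi(N_t)\,dx=\int\varphi'(N_t)V_t\,dx$, and Lipschitz continuity upgrades this right derivative to the distributional statement. For the momentum equation,
\begin{align*}
\int\varphi(N_t)V_t\,dx=\int\varphi(N_t)V_0\,dx,
\end{align*}
and differentiating gives
\begin{align*}
\int\varphi'(N_t)V_tV_0\,dx=\int\varphi'(N_t)V_t^2\,dx,
\end{align*}
where the second equality again uses that $\varphi'(N_t)V_t$ is a function of $N_t$. This is the weak form of $\partial_t(\rho v)+\partial_y(\rho v^2)=0$.

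The main obstacle is the restart lemma together with the identification of the derivative. There is an issue in the derivative step: within a newly forming cluster, the tangent-cone projection of $\bar V_s$ could in principle fail to be constant, for instance when fresh collisions occur at time $s$ itself. I would handle this by proving directly from the convex-envelope picture that, for small $h>0$, the flat intervals of $N_{s+h}$ stabilize to those of $N_{s^+}$. Then $V_s$ is the average of $V_0$ over those intervals, which is constant on each of them, and this gives both required properties at once.
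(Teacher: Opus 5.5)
The paper gives no proof of this theorem; it imports it from Natile--Savar\'e, whose argument starts from finitely many sticky particles and passes to the limit. Your direct route (Lipschitz bound, restart lemma, Zarantonello's formula for the directional derivative of $\proj_{\mathcal K}$ at a point of $\mathcal K$, then the weak formulation) is a genuinely different and workable approach. The Lipschitz estimate, part (b), and the final weak-form computations are fine.

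\textbf{Gap in the derivative step.} The gap is in the step you flag yourself, and your proposed fix is false. For continuum data the flat intervals of $N_{s+h}$ need not stabilize as $h\downarrow0$. With $N_0(x)=x$ and $V_0(x)=-\sqrt{x}$, one finds, for $0<t\le 3/2$, that $N_t\equiv-2t^2/9$ on $[0,4t^2/9)$ and $N_t(x)=x-t\sqrt{x}$ beyond. So this cluster grows strictly at every instant.

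The missing idea is that the feared non-constancy never occurs. It comes from your claim that $N_s$ is constant \emph{exactly} on the components of $\{F_s^{**}<F_s\}$. In fact flat intervals of $N_s$ can also lie in the contact set, for instance from atoms of $\rho_0$ or from simultaneous arrivals at time $s$. Let $L$ be any flat interval of $N_s$, with $N_s\equiv c$ on $L$ and $s>0$. Then a.e.\ on $L$ one has $\bar V_s=(c-\tilde N_0)/s$. Here $\tilde N_0$ is $N_0$ averaged over the components of $\{F_s^{**}<F_s\}$; each such component lies inside $L$ or is disjoint from it, and $\tilde N_0$ is nondecreasing. Hence $\bar V_s$ is nonincreasing on $L$.

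Since $T_{\mathcal K}(N_s)$ imposes monotonicity separately on each flat interval, the projection acts interval by interval. A nonincreasing function on $L$ projects onto its mean, because its primitive is concave (or by Chebyshev's inequality). At $s=0$, use that $V_0=v_0\circ N_0$ is already constant on the flat intervals of $N_0$. Therefore $V_s=\mathbb{E}[V_0\mid\sigma(N_s)]$ for every $s\ge0$: the averages are over the flat intervals of $N_s$ itself, not of $N_{s+h}$. This gives constancy on level sets and conservation of momentum inside clusters at once.

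\textbf{Two further points.} First, (a) cannot follow from $F_t=F_s+(t-s)\int_0^xV_0$ and the contact structure at time $s$ alone. It needs $N_0\in\mathcal K$; without convexity of $F_0$, a sufficiently convex $\int_0^xV_0$ would erase non-contact intervals. A clean proof goes as follows.
\begin{itemize}
\item In one dimension, $F_t^{**}(x)$ is the infimum of $\lambda F_t(u_1)+(1-\lambda)F_t(u_2)$ over $u_1\le x\le u_2$ with $\lambda u_1+(1-\lambda)u_2=x$.
\item Each such term is affine in $t$, so $t\mapsto F_t^{**}(x)$ is concave.
\item Hence $g(t)=F_t(x)-F_t^{**}(x)$ is convex and nonnegative, and $g(0)=0$ because $F_0$ is convex.
\item Convexity then gives $g(t)\ge\frac{t}{s}g(s)$ for $t>s$.
\end{itemize}

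Second, (a) is not yet the full sticky condition, because it says nothing about flat intervals of $N_s$ that lie in the contact set. The monotonicity observation above closes this as well. The restarted datum $N_s+(t-s)\bar V_s$ is nonincreasing on each flat interval $L$ of $N_s$, so its projection onto $\mathcal K$ is constant on $L$ for every $t>s$.
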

    The main idea of this result is that, by working with the optimal map $N_t$, we translate our problem from working in ``position-space," where the independent variable is position, to ``mass-space," where the independent variable is (cumulative) mass. 
    This has the advantage of letting us work on a unit interval (even if $\spt(\rho_t)$ is unbounded) with the Lebesgue measure. The sticky particle constraint ensures that mass stays ordered (i.e., if some amount of mass begins to the left of some other amount of mass, then the initial leftward mass will remain on the left of the other mass at all times). Succinctly, $N_t$ will always be nondecreasing, hence why we can project onto $\mathcal K$. 

    Consider initial conditions $(\rho_0,v_0)$. Importantly, Theorem \ref{ProjectionFormula} gives \textit{a} solution to \eqref{PE}. In the case that $v_0$ is $L^\infty(\rho_0)$ and right-continuous, (as discussed in Section \ref{Overview}) \cite{BrenGren}, \cite{Huang-Wang}, \cite{Tudorascu-2008}, and \cite{Suder-Tudorascu} establish uniqueness of this solution; however, no such result exists in the more general case, $v_0\in L^2(\rho_0)$. Throughout the rest of this paper, we will refer to the solution generated by the projection formula in Theorem \ref{ProjectionFormula} as \textit{the} sticky particle solution (SPS). A general result on uniqueness for SPS would enhance these results: as of now, Theorem $\ref{ProjectionFormula}$ gives only \textit{a} solution in some cases, and it is this solution to which the results of this paper apply. 
    
\subsection{Some Brief Convex Analysis}

    Analysis of the projection formula will be key to our approach. The projection formula relies on the use of the convex envelope, and as such it is necessary to recall some basic properties of convex functions of one real variable. The following results can be found in \cite{Roberts} and \cite{Rockafellar}.  
    \begin{prop}\label{convex-properties} \!\!\!\! \footnote{\textit{Proof.} \cite{Roberts} Section 11.}
        Suppose that $f:[0,1]\to\mathbb{R}$ is convex. Then, $f$ has the following properties:
        \begin{enumerate}[label=\textup{(\roman*)}]
            \item $f$ is continuous on $(0,1)$ and absolutely continuous on all $[a,b]\subseteq(0,1)$;
            \item $f$ is left- and right-differentiable everywhere on $(0,1)$;
            \item $f$ is differentiable almost everywhere;
            \item The right derivative of $f$ is right-continuous and nondecreasing. 
        \end{enumerate}
    \end{prop}
    \begin{prop}\label{pointwise-sup} \!\!\!\! \footnote{\textit{Proof.} \cite{Roberts} Theorem 13 D.}
        Suppose $\{f_i:[0, 1]\to\mathbb{R}\}$ is a collection of convex functions and let $f(x)=\sup_i f_i(x)$. 
        If $J\coloneq\{x \ | \ f(x)<\infty\}$ is nonempty, then $J$ is an interval and $f$ is convex on $J$. 
    \end{prop}

    
    \begin{thm} \!\!\!\! \footnote{\textit{Proof.} Consider the $1$-dimensional case of Theorem 24.5 in \cite{Rockafellar}. The general version of the theorem presented in the text refers to the more general notion of the \textit{subdifferential}.}
        Let $f$ be convex and finite on $(0,1)$. Let $\{f_i\}_i$ be a sequence of convex functions on $(0,1)$ converging pointwise to $f$. Let $x^*\in I$ and $\{x_i\}_i$ a sequence in $I$ converging to $x^*$. Then, for all $\varepsilon>0$ there exists an index $i_0$ so that 
        \begin{align*}
            \Big[\frac{d^- f_i}{dx}(x_i),\frac{d^+ f_i}{dx}(x_i)\Big] \subset \Big[\frac{d^- f}{dx}(x^*)-\varepsilon,\frac{d^+ f}{dx}(x^*)+\varepsilon\Big] \quad \text{ for all }i\geq i_0. 
        \end{align*}
    \end{thm}
    \begin{cor}\label{DerivSwap}
        Let $f$ be convex and finite on $(0,1)$. Let $\{f_i\}_i$ be a sequence of convex functions on $(0,1)$ converging pointwise to $f$. Let $\mathcal{M}$ be the set where $f$ is differentiable (a.e. in $(0,1)$). Then, 
        \begin{align*}
            \lim_{i\to\infty} \frac{d^+}{dx}f_i(x)=f'(x) \quad \text{on } \ \mathcal{M}
        \end{align*}    
        Further, in $(0,1)\backslash \mathcal{M}$, the right derivative of $f$ is uniquely specified by the $f_i$.
    \end{cor}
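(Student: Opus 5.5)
The plan is to derive Corollary \ref{DerivSwap} directly from the preceding theorem (the one-dimensional case of Theorem 24.5 in \cite{Rockafellar}), applied with the constant sequence $x_i\equiv x^*$.

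\textbf{Step 1: points of differentiability.} Fix $x^*\in\mathcal{M}$, so that
\begin{align*}
\frac{d^-f}{dx}(x^*)=\frac{d^+f}{dx}(x^*)=f'(x^*).
\end{align*}
By Proposition \ref{convex-properties}(ii), each $f_i$ has finite one-sided derivatives at $x^*$, and by convexity $\frac{d^-f_i}{dx}(x^*)\le\frac{d^+f_i}{dx}(x^*)$. Given $\varepsilon>0$, the theorem applied with $x_i= x^*$ yields an index $i_0$ such that, for all $i\ge i_0$,
\begin{align*}
\frac{d^+f_i}{dx}(x^*)\in\Big[\frac{d^-f_i}{dx}(x^*),\frac{d^+f_i}{dx}(x^*)\Big]\subset\big[f'(x^*)-\varepsilon,\,f'(x^*)+\varepsilon\big].
\end{align*}
Since $\varepsilon$ is arbitrary, $\frac{d^+}{dx}f_i(x^*)\to f'(x^*)$. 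Proposition \ref{convex-properties}(iii) guarantees that $\mathcal{M}$ has full measure, which gives the parenthetical ``a.e.'' claim.

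\textbf{Step 2: the complement of $\mathcal{M}$.} I read the final sentence as saying that $\frac{d^+f}{dx}$ is determined everywhere by the limits obtained in Step 1. Take $x\in(0,1)\setminus\mathcal{M}$. Because $\mathcal{M}$ is dense (its complement is null), there is a sequence $y_k\in\mathcal{M}$ with $y_k\downarrow x$. By Proposition \ref{convex-properties}(iv), $\frac{d^+f}{dx}$ is right-continuous and nondecreasing, so
\begin{align*}
\frac{d^+f}{dx}(x)=\lim_{k\to\infty}\frac{d^+f}{dx}(y_k)=\lim_{k\to\infty}f'(y_k)=\lim_{k\to\infty}\lim_{i\to\infty}\frac{d^+f_i}{dx}(y_k).
\end{align*}
Thus the right derivative of $f$ at $x$ is uniquely specified by the $f_i$. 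In fact, since $\frac{d^+f}{dx}$ is monotone and right-continuous, it is determined by its values on any dense subset of $\mathcal{M}$.

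\textbf{Main obstacle.} The only delicate point is interpretive. At a kink $x$ one cannot expect $\frac{d^+f_i}{dx}(x)\to\frac{d^+f}{dx}(x)$; the theorem only places the limit points in $\big[\frac{d^-f}{dx}(x),\frac{d^+f}{dx}(x)\big]$. A naive statement asserting pointwise convergence at every point would therefore be false. This is why I take the ``uniquely specified'' claim to mean reconstruction via right-limits over the full-measure set $\mathcal{M}$, as in Step 2.
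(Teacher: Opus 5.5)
Your proposal is correct and follows the paper's proof: on $\mathcal{M}$ you apply the preceding Rockafellar-type theorem with the constant sequence $x_i\equiv x^*$, and off $\mathcal{M}$ you recover $\frac{d^+f}{dx}(x)$ as $\lim_k f'(y_k)$ along $y_k\in\mathcal{M}$ with $y_k\downarrow x$, using right-continuity of the right derivative. Your remark that $\frac{d^+f_i}{dx}(x)$ need not converge to $\frac{d^+f}{dx}(x)$ at kinks is also correct, and it explains why the second claim is phrased as ``unique specification'' rather than as pointwise convergence.
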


    \begin{proof}
        For each $x^*\in \mathcal{M}$, take $x_i=x^*$ for all $i$ and apply the above theorem. 
        Unique specification for $x^*\notin \mathcal{M}$ comes from applying right continuity and taking a monotone decreasing sequence $x_i\to x^*$ such that $f'(x_i)$ exists for for all $i$ and passing the limit through by right continuity. 
    \end{proof}

\subsection{Galilean Invariance of PE}
    Solutions to \eqref{PE} are Galilean invariant in both position and velocity. While we do not make use of position-invariance, we will often want to normalize solutions to have $0$ total momentum. Define the total momentum of the initial conditions $(\rho_0,v_0)$ by 
    \begin{align*}
        p(\rho_0,v_0)\coloneq\int v_0(y)\rho_0(dy). 
    \end{align*}
    Now, this property goes as follows: $(\bar{\rho}_t,\ \bar{v}_t)\coloneq\big((\text{Id}-tp)_\#\rho_t,v(t,\cdot+tp)-p\big)$ is the SPS to \eqref{PE} for the initial condition $(\rho_0,v_0-p)$ if and only if $(\rho_t,v(t,\cdot))$ is the SPS for the initial condition $(\rho_0,v_0)$. We will state and prove the result in terms of the optimal maps. 
    \begin{prop}[Galilean Momentum-Invariance]\label{Galilean-Invariance}
        Suppose $\rho_0\in\mathcal{P}_2(\mathbb{R})$ and $v_0\in L^2(\rho_0)$, and consider $(N_0,V_0)$, where ${N_0}_\#\mathcal{L}=\rho_0$, $V_0\coloneq v_0(N_0)$. Then, 
    \begin{align*}
        (\bar{N}_t,\bar{V}_t)\coloneq\big(N_t-tp(\rho_0,v_0),\bar{V}_t-p(\rho_0,v_0)\big)
    \end{align*}
    gives the SPS $(\bar{\rho},\bar{v})$ to \eqref{PE} originating at $\big(N_0,V_0-p(\rho_0,v_0)\big)$ if and only if $(N_t,V_t)$ captures the SPS originating at $(N_0,V_0)$. Furthermore, 
    \begin{align*}
        \bar{Y}(t)\coloneq\int_0^1 \bar{N}_t \, dx=\int y\rho_t(dy)
    \end{align*}
    is constant, and for all $t\geq0$,
    \begin{align}\label{0-totalMomentum}
        \int_0^1 \bar{V}_t(x) \, dx=\int \bar{v}_t(y) \bar{\rho}_t(dy)=0.
    \end{align}
    \end{prop}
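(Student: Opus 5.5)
The plan is to work entirely in mass-space and reduce everything to the projection formula (Theorem \ref{ProjectionFormula}), together with one structural fact about $\proj_{\mathcal K}$: it commutes with adding constants. Write $p\coloneq p(\rho_0,v_0)=\int v_0\,d\rho_0=\int_0^1 V_0(x)\,dx$, the last equality by the change of variables ${N_0}_\#\mathcal{L}=\rho_0$. The first step is to show that for any constant $c\in\mathbb{R}$ and $f\in L^2(0,1)$,
\begin{align*}
\proj_{\mathcal K}(f+c)=\proj_{\mathcal K}f+c.
\end{align*}
I would prove this from the formula \eqref{projection-computation}. The primitive of $f+c$ is $F(x)+cx$, and adding an affine function commutes with taking the convex envelope, so $\conv(F+cx)=\conv(F)+cx$. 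Taking right derivatives then gives the claim. An alternative route is the variational characterization: $\mathcal K+c=\mathcal K$, and translation by a constant is an isometry of $L^2$.

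With this in hand, the equivalence is a direct computation. If $N_t=\proj_{\mathcal K}(N_0+tV_0)$, then
\begin{align*}
N_t-tp=\proj_{\mathcal K}\big(N_0+t(V_0-p)\big),
\end{align*}
which is exactly the projection-formula solution started at $(N_0,V_0-p)$. Conversely, adding $tp$ back recovers $N_t$. Taking right time-derivatives gives $\bar V_t=V_t-p$. Here I read the statement's $\bar V_t-p$ as a typo for $V_t-p$.

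To translate back to $(\bar\rho,\bar v)$, note that $\bar N_t$ is nondecreasing and right-continuous, so it is the optimal map of $(\mathrm{Id}-tp)_\#\rho_t$. Moreover, $\bar V_t=\bar v_t\circ\bar N_t$ with $\bar v_t(y)=v(t,y+tp)-p$.

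For the conservation statements, I would first show that the projection preserves the mean: $\int_0^1\proj_{\mathcal K}f\,dx=\int_0^1 f\,dx$. Since $\pm1\in\mathcal K$ and $\mathcal K$ is a convex cone, the projection inequality $\langle f-\proj_{\mathcal K}f,\,k-\proj_{\mathcal K}f\rangle\le 0$ for all $k\in\mathcal K$, applied with $k=\proj_{\mathcal K}f\pm1$, gives $\langle f-\proj_{\mathcal K}f,1\rangle=0$. Applying this to $\bar N_t$ gives
\begin{align*}
\int_0^1 \bar N_t\,dx=\int_0^1 N_0\,dx+t\int_0^1 (V_0-p)\,dx=\int_0^1 N_0\,dx,
\end{align*}
which is constant in $t$ and equals the center of mass $\int y\,\rho_0(dy)$. (The statement writes $\rho_t$ on the right-hand side; the identity that actually holds is with $\bar\rho_t$, and for the original solution the mean instead moves with speed $p$.)

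For \eqref{0-totalMomentum}, differentiate the constant $\bar Y(t)$ from the right. The obstacle is justifying the exchange of $\frac{d^+}{dt}$ with $\int_0^1$. I would handle it using the contraction property (ii) and positive homogeneity (i): these show $t\mapsto \bar N_t$ is Lipschitz in $L^2$, with difference quotients bounded by $\|V_0-p\|_{L^2}$. Together with the pointwise right derivative supplied by Theorem \ref{ProjectionFormula}, dominated convergence (or weak $L^2$ convergence of the difference quotients) lets me pass to the limit, giving $\int_0^1\bar V_t\,dx=0$. Changing variables via $\bar N_{t\#}\mathcal L=\bar\rho_t$ then yields $\int\bar v_t\,d\bar\rho_t=0$.
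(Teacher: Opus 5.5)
Your proposal is correct. The first part matches the paper: both reduce to $\proj_{\mathcal K}(f+c)=\proj_{\mathcal K}f+c$ via $\conv(G+c\,\Id)=\conv(G)+c\,\Id$ and then take a right derivative.

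You prove the two conservation statements by a different route.

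\textbf{Constancy of $\bar Y$.} The paper stays inside the convex-envelope calculus. It writes $\int_0^1 N_t\,dx$ as $\lim_{h\to0^+}\int_h^{1-h}\frac{d^+}{dx}G_t^{**}\,dx$ and uses absolute continuity of $G_t^{**}$ on $[h,1-h]$. This silently also needs $G_t^{**}$ to be continuous up to $0$ and $1$ and to agree with $G_t$ there. You instead test the variational inequality with $k=\proj_{\mathcal K}f\pm1$ to get $\langle f-\proj_{\mathcal K}f,1\rangle=0$. That uses only that $\mathcal K$ is a closed convex cone containing the constants, and needs nothing about the endpoint behavior of $G_t^{**}$.

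\textbf{Zero momentum.} The paper invokes the weak continuity equation with $\varphi(y)=y$. That test function lies outside the stated class $C_b^1(\mathbb{R})$, so the step implicitly needs an approximation argument. Your mass-space argument avoids this: you differentiate $\bar Y$ in $t$, with difference quotients bounded in $L^2$ by $\|V_0-p\|_{L^2}$ thanks to the contraction property. The version of that step that actually works is the weak-$L^2$ (or Vitali) one, not dominated convergence, since no dominating function is available. If the right derivative in Theorem \ref{ProjectionFormula} is understood in $L^2$, as in Natile--Savar\'e, the exchange of limit and integral is immediate.

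The paper's route ties the identity to the physical statement ``zero momentum iff constant center of mass.'' Yours is shorter and self-contained in mass-space.

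Both of your corrections to the statement are right. The second component should be $V_t-p$. Also $\bar Y(t)=\int y\,\bar\rho_t(dy)$ rather than $\int y\,\rho_t(dy)$, which is consistent with the paper's later formula $Y(t)=\bar Y(0)+tp(\rho_0,v_0)$ for the center of mass of $\rho_t$.
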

    \begin{proof} \!\!\!\! \footnote{Using the projection formula, as we do here, yields a short proof. However, it is not needed in order to establish this property; see the proof of Proposition 2.2, 2 in \cite{Repulsive-Paper}, where no projection formula is available.}
        The projection formula (Theorem \ref{ProjectionFormula}) tells us that all we need to establish is that $N_t-tp=\proj_{\mathcal{K}}\big(N_0+t(V_0-p)\big)$ if and only if $N_t=\proj_{\mathcal{K}}(N_0+tV_0)$. Fix $t>0$ and let $k=-tp(\rho_0,v_0)$. Observe that since, for $G_t$ the primitive of $N_0+tV_0$, 
        \begin{align*}
            \conv\big(G_t+&k\Id\big)(x)=\sup\big\{a+bx \ \big| \ a,b\in\mathbb{R}\text{, }a+bu\leq G_t(u)+ku \ \forall u\in[0,1]\big\}
            \\
            &=\sup\big\{a+(b-k)x+kx \ \big| \ a,(b-k)\in\mathbb{R}\text{, }a+(b-k)u\leq G_t(u) \ \forall u\in[0,1]\big\}
            \\
            &=\conv\big(G_t\big)(x)+kx, 
        \end{align*}
        we have $\proj_{\mathcal{K}}(N_0+tV_0+k)=\proj_{\mathcal{K}}(N_0+tV_0)+k$, (referencing \eqref{projection-computation}) from which the first part of the result follows. Observe also that for all $t\geq0$, 
        \begin{align*}
            \bar{Y}(t)=\int_0^1 N_t(x)-tp(\rho_0,v_0) \, dx=\int_0^1 N_t(x) \, dx-tp(\rho_0,v_0)=\bar{Y}(0) 
        \end{align*}
        since, using the projection formula and Proposition \ref{convex-properties}, one may verify that for fixed $t$,
        \begin{align*}
            \int_0^1 N_t(x) \, dx&=\lim_{h\to0^+}\int_h^{1-h}\frac{d^+}{dx}\conv\Big(\int_0^x N_0(w)+tV_0(w)dw\Big) \, dx=\bar{Y}(0)+tp(\rho_0,v_0),
        \end{align*}
        because the convex envelope is absolutely continuous on $[h,1-h]
        \subset(0,1)$. This also gives us \eqref{0-totalMomentum} by the weak formulation of the continuity equation, given by 
        \begin{align*}
            \frac{d}{dt}\int\varphi(y)\rho_t(dy)=\int\varphi'(y)v(t,y)\rho_t(dy) \quad \text{for all} \quad \varphi\in C_b^1(\mathbb{R}), 
        \end{align*}
        with $\varphi(y)=y$. (In other words, $0$ total momentum is equivalent to constant center of mass.)
    \end{proof}
    Because of this property, we will consider, \textit{unless explicitly stated otherwise}, solutions of $0$ total momentum, i.e., for all $t\geq0$,
    \begin{align}\label{0-total-momentum}
        \int v(t,\cdot) \, d\rho_t=\int_0^1 V_t(x) \, dx=0.
    \end{align}
\section{Characterization of Equilibrium}
    \subsection{Our Condition and Its Motivation}



    
    Consider initial conditions $\rho_0\in\mathcal{P}_2(\mathbb{R})$ and $v_0\in L^2(\rho_0)$; equivalently, if we have the optimal map ${N_0}_\#\mathcal{L}=\rho_0$, $N_0\in L^2(0,1)$ is nondecreasing and right-continuous, and $V_0\in L^2(0,1)$, where $V_0\coloneq v_0(N_0)$. Before we begin, we state precisely how we use terminology regarding equilibrium: 
    \begin{defn}\label{equilibrium-def}
        $(\rho,v)$ \textbf{converges to equilibrium} if \mbox{$W_1(\rho_t, \rho_\infty)\to0$} as $t\to\infty$ for some \mbox{$\rho_\infty\in \mathcal{P}_2(\mathbb{R})$}, or equivalently, \mbox{$\|N_t-N_\infty\|_{L^1(0,1)}\to0$}, $N_\infty\in L^2(0,1)$.\footnote{See Section \ref{p>1} for consideration of $W_p$ for $p>1$.} If, additionally, there exists $T\geq0$ such that $\rho_t=\rho_\infty$, ($N_t=N_\infty$) for all $t\geq T$, we say that $(\rho,v)$ \textbf{collapses to equilibrium in finite time}. The minimal such $T$ is the \textbf{time of collapse}.
\end{defn}
    
    Now, define the functions $F$ and $G_t$ (for fixed $t\geq0$) by 
    \begin{alignat}{2}\label{F-def}
        F(x)\coloneq&\int_0^xV_0(w) \, dw, \quad &x\in[0,1],
        \\
        \label{G-def}
        G_t(x)\coloneq&\int_0^x N_0(w) \, dw+t\int_0^xV_0(w) \, dw, \quad &x\in[0,1].
    \end{alignat}
    (They are well defined because $\rho_0\in\mathcal{P}_2(\mathbb{R})$ and $v_0\in L^2(\rho_0)$ so $N_0,V_0\in L^2(0,1)$.) Denote $\conv(F)$, the greatest convex function bounded above by $F$, as $F^{**}$. Similarly, $G_t^{**}\coloneq \conv(G_t)$.

    Throughout this paper, we call \eqref{star} the condition that 
    \begin{align}\label{star}\tag{$\bigstar$}
        F(1)=0 \quad\text{and}\quad F(x)\geq0 \quad\text{for all}\quad x\in(0, 1). 
    \end{align}
    We aim to prove that \eqref{star} is equivalent to convergence to equilibrium of the SPS to \eqref{PE} with initial conditions given by $(N_0, V_0)$.

    The first part of the condition, $F(1)=0$, is simply restating \eqref{0-total-momentum}. One may observe that, if we take $(\bar{\rho},\bar{v})$ as in Proposition \ref{Galilean-Invariance}, the solution $(\rho,v)$ has \textbf{center of mass} $Y(t)\coloneq\bar{Y}(0)+tp(\rho_0,v_0)$. It is therefore clear that convergence to an equilibrium cannot occur if $F(1)\neq0$.\footnote{Of course, it may be the case that $W_2(\rho_t,\big(\Id+tp)_\#\rho_\infty\big)\to0$ as $t\to\infty$ for some stationary distribution $\rho_\infty\in\mathcal{P}_2(\mathbb{R})$. In this case, $\bar{\rho}_t$ goes to equilibrium, and $\rho_t$ approaches a stable (just not stationary) distribution.} Thus, we will consider solutions for which $F(1)=0$ and focus on the second part of $\eqref{star}$.

    This condition has a clear physical interpretation: the integral of velocity in mass space is precisely the momentum of the initial distribution up to that certain level of mass. What \eqref{star} prescribes is that the total momentum is $0$ and the momenta of all ``left-systems" are always nonnegative. The idea is that if at any point there were a leftmost ``chunk of mass" with negative total momentum, 
    the mass to the right could only impose more leftward momentum (via collision); thus, that ``chunk of mass" must continue leftwards for all time (and simultaneously, if the total momentum of the system is 0, there must be other mass also moving ever rightwards). \textit{Scrumptiously}\comm{{\color{red} you guys are gonna make me remove this but I think it's a good word in this context} No, it is appropriate here. - Joseph}, we will see that this momentum term appears naturally in our proof, resulting from the projection formula. Indeed, due to the lack of other forces it is sensible that the momentum would completely determine the system (such an approach fails in, for example, the repulsive pressureless Euler-Poisson system, where mass exerts an outwards force, see \cite{Repulsive-Paper}). 

    \eqref{star} is equivalent to $V_0\in\mathcal{K}^\circ\coloneq\big\{f\in L^2(0,1) \ \big| \ \langle f,g\rangle=0 \text{ for all } g\in\mathcal{K}\}$. $\mathcal{K}^\circ$ is the \textbf{polar cone} of $\mathcal{K}$ discussed in \cite{Natile-Savare}. An alternative characterization of the polar cone is the set of zeros of $\proj_{\mathcal{K}}$. Thus, our condition can be equivalently written as $\proj_{\mathcal{K}}V_0=0$.\footnote{To see this, start with $\proj_\mathcal{K}V_0=\frac{d^+}{dx}F^{**}$; from $\eqref{star}$, clearly $F^{**}=0$. Conversely, if $\proj_\mathcal{K}V_0=0$, $F^{**}$ is piecewise constant. From here, it is not difficult to argue (similar to the proof of \textit{Claim 3} in Lemma \ref{divergence-of-projection}) that $F^{**}=0$, meaning $F\geq0$ and $F(1)=0$. Here, we would like to acknowledge that our condition in this form was recognized in \cite{Hynd-Tudorascu} as a necessary condition for equilibrium in the case that $\rho_0$ is of compact support.} As observed in \cite{Hynd-Tudorascu}, since the \hyperlink{PositiveHomogeneity}{Positive Homogeneity} and \hyperlink{Contraction}{Contraction} properties of $\proj_\mathcal{K}$ give us that 
    \begin{align*}
        \|\proj_\mathcal{K}(N_0+tV_0)&-t\proj_\mathcal{K}(V_0)\|_{L^2(0,1)}=\|\proj_\mathcal{K}(N_0+tV_0)-\proj_\mathcal{K}(tV_0)\|_{L^2(0,1)}
        \\
        &\leq\|N_0+tV_0-tV_0\|_{L^2(0,1)}=\|N_0\|_{L^2(0,1)}, 
    \end{align*}
    meaning that if \eqref{star} is satisfied, 
    \begin{align}\label{N-L2Bound}
        \|N_t\|_{L^2(0,1)}=\|\proj_\mathcal{K}(N_0+tV_0)\|_{L^2(0,1)}\leq\|N_0\|_{L^2(0,1)}. 
    \end{align}
    \begin{prop}\label{uniform-integrable}
        If \eqref{star} is satisfied and $N_\infty\in L^2(0,1)$, then $\{|N_t-N_\infty|\}_{t\geq0}$ is a uniformly integrable family of functions on $(0,1)$. 
    \end{prop}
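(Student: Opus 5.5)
The plan is to use the uniform $L^2$ bound \eqref{N-L2Bound}: a family bounded in $L^2$ of a finite measure space is uniformly integrable. Under \eqref{star}, \eqref{N-L2Bound} gives $\|N_t\|_{L^2(0,1)}\leq\|N_0\|_{L^2(0,1)}$ for all $t\geq0$. By Minkowski's inequality,
\begin{align*}
    \sup_{t\geq0}\|N_t-N_\infty\|_{L^2(0,1)}\leq\|N_0\|_{L^2(0,1)}+\|N_\infty\|_{L^2(0,1)}=:C<\infty,
\end{align*}
since $N_\infty\in L^2(0,1)$ by hypothesis.

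Next we deduce uniform integrability. For any measurable $E\subseteq(0,1)$, Cauchy--Schwarz gives
\begin{align*}
    \int_E|N_t-N_\infty|\,dx\leq\|N_t-N_\infty\|_{L^2(0,1)}\,\mathcal{L}(E)^{1/2}\leq C\,\mathcal{L}(E)^{1/2},
\end{align*}
which is uniformly small in $t$ once $\mathcal{L}(E)$ is small. The family is also bounded in $L^1$ by $C$, since $\mathcal{L}((0,1))=1$. If one uses the definition $\lim_{M\to\infty}\sup_t\int_{\{|N_t-N_\infty|>M\}}|N_t-N_\infty|\,dx=0$ instead, the same estimate works. Indeed, on $\{|N_t-N_\infty|>M\}$ we have $|N_t-N_\infty|\leq|N_t-N_\infty|^2/M$, so the integral is at most $C^2/M$.

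No step is a serious obstacle, since the substantive input, the contraction and homogeneity argument yielding \eqref{N-L2Bound}, has already been carried out. The one point to check is that \eqref{N-L2Bound} really uses $\proj_{\mathcal{K}}V_0=0$, that is, the equivalence of \eqref{star} with $V_0\in\mathcal{K}^\circ$ noted above. That equivalence gives $t\proj_{\mathcal{K}}V_0=0$ in the contraction estimate, and we take it as given.
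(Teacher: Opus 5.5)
Your proposal is correct and follows the paper's route: the uniform bound \eqref{N-L2Bound} combined with Cauchy--Schwarz on small sets. The only cosmetic difference is that the paper splits $|N_t-N_\infty|\leq|N_t|+|N_\infty|$ and handles the $N_\infty$ term by absolute continuity of its integral. You instead bound the difference directly in $L^2$ via Minkowski.
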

    \begin{proof}
        Let $\varepsilon>0$ and consider a set $A\subset(0,1)$ of measure less than $\delta$, where $\delta>0$ is to be decided. For any $t\geq0$, by the Cauchy-Schwarz inequality and \eqref{N-L2Bound},
        \begin{align*}
            \int_A|N_t(x)|\, dx\leq\|N_t\|_{L^2(A)}\|1\|_{L^2(A)}\leq\|N_0\|_{L^2(0,1)}\sqrt{\delta}. 
        \end{align*}
        Take $\delta<\big(\varepsilon/(2\|N_0\|_{L^2(0,1)})\big)^2$. Since $N_\infty$ is integrable, we can also take $\delta$ small enough so that $\|N_\infty\|_{L^1(A)}<\varepsilon/2$; the claim follows. 
    \end{proof}
    By means of the Vitali Convergence Theorem on the set of finite measure $(0,1)$, the above proposition allows us to make the following observation. 
    \begin{remark}\label{pointwise-to-L1}
        If $N_t\to N_\infty\in L^2(0,1)$ pointwise a.e., then $\|N_t-N_\infty\|_{L^1(0,1)}\to0$ as $t\to\infty$. Further, as the pointwise a.e. limit of nondecreasing functions, $N_\infty$ has a nondecreasing, right-continuous representative in $L^2(0,1)$. 
    \end{remark}
    This allows us to argue in terms of pointwise convergence in what follows. With this all in mind, it is not difficult to see the following, a weaker version of Theorems \ref{necessary-and-sufficient} and \ref{characterization-of-equilibrium}. 
    \begin{prop}\label{motivation-prop}
        Suppose that $F(1)=0$ and $F$ is strictly positive on $(0,1)$. Then, $(\rho,v)$ converges to the equilibrium $\rho_\infty=\delta_{Y(0)}$, $Y(0)\in\mathbb{R}$: a single, stationary, point mass. 
    \end{prop}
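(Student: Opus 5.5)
The plan is to work entirely with the projection formula (Theorem \ref{ProjectionFormula}) and show that $N_t\to Y(0)$ pointwise on $(0,1)$. By Remark \ref{pointwise-to-L1}, this gives $\|N_t-N_\infty\|_{L^1(0,1)}\to0$ with $N_\infty\equiv Y(0)$, i.e.\ $\rho_\infty=\delta_{Y(0)}$. Here $Y(0)=\int_0^1N_0\,dx=G_t(1)-tF(1)=G_t(1)$, since $F(1)=0$.

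Fix $t>0$ and write $G_t=H+tF$, where $H(x)\coloneq\int_0^xN_0\,dw$. It is convenient to rescale: $N_t=\frac{d^+}{dx}\conv(G_t)=\frac{d^+}{dx}\conv(G_t/t)\cdot t$. Since $G_t/t=H/t+F$ and $H/t\to0$ uniformly on $[0,1]$ ($H$ is continuous), one could try Corollary \ref{DerivSwap} on $\conv(G_t/t)\to\conv(F)=0$. That only yields $N_t/t\to0$, however, which is too weak. The better route is to compare $\conv(G_t)$ directly with the chord $\ell(x)\coloneq Y(0)\,x$ joining $(0,0)$ to $(1,Y(0))$. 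Note that $G_t(0)=0$ and $G_t(1)=Y(0)$, so $\conv(G_t)$ shares these endpoint values with $\ell$.

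The key step is the following. Fix a compact interval $[a,b]\subset(0,1)$. Strict positivity and continuity of $F$ give $m\coloneq\min_{[a,b]}F>0$. The function $H-\ell$ is continuous on $[0,1]$ and vanishes at $0$ and $1$. Near the endpoints, $F>0$ holds but is not bounded below, so the endpoints need separate handling. I claim that for $t$ large, $\conv(G_t)=\ell$ on $[a,b]$, or at least that the supporting slopes of $\conv(G_t)$ there converge to $Y(0)$. The slope approach seems cleaner: for $x\in(0,1)$, $N_t(x)$ is squeezed between the left and right derivatives of $\conv(G_t)$ at $x$. Since $\conv(G_t)\le\ell$ by convexity with matching endpoints, and $\conv(G_t)\ge\conv(H)+t\cdot 0=\conv(H)$ because $F\ge0$, the convex function $\conv(G_t)$ is trapped between $\conv(H)$ and $\ell$. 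This alone does not force the slopes. So I would instead use the rescaled convergence near the endpoints together with the positive lower bound $m$ in the middle. For $x\in[a,b]$ and any $u$,
\[
G_t(u)-\ell(u)\ \ge\ H(u)-\ell(u)+tF(u).
\]
Hence, for $t\ge t_0$ large, $G_t>\ell+1$ on $[a,b]$. The consequence is that the lower convex envelope cannot touch $G_t$ at points of $[a,b]$. It must therefore be affine across $[a,b]$, with contact points $u_-(t)<a$ and $u_+(t)>b$.

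Next I would show that these contact points tend to $0$ and $1$ as $t\to\infty$. For any $\eta>0$, $F\ge m_\eta>0$ on $[\eta,1-\eta]$, so $G_t-\ell\to+\infty$ uniformly there. Meanwhile $\conv(G_t)\le\ell$ everywhere, so no contact point can lie in $[\eta,1-\eta]$ once $t$ is large. Thus $\conv(G_t)$ is affine on $[\eta,1-\eta]$, joining points $(u_\pm,G_t(u_\pm))$ with $u_-<\eta$ and $u_+>1-\eta$. Its slope is $(G_t(u_+)-G_t(u_-))/(u_+-u_-)$. Using $\conv(G_t)\le\ell$ and $\conv(G_t)\ge\conv(H)$, both continuous with the same values at $0$ and $1$, this affine piece is pinched between two continuous functions whose values at $\eta$ and $1-\eta$ are within $o_\eta(1)$ of those of $\ell$. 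A convex function on $[0,1]$ that is affine on $[\eta,1-\eta]$ and within $\omega(\eta)$ of $\ell$ at both ends of that interval has slope within $2\omega(\eta)/(1-2\eta)$ of $Y(0)$. Here $\omega(\eta)\coloneq\sup_{[0,\eta]\cup[1-\eta,1]}|\ell-\conv(H)|\to0$. Letting $t\to\infty$ and then $\eta\to0$ gives $N_t(x)\to Y(0)$ for every $x\in(0,1)$.

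The main obstacle is this endpoint control, since $F$ is not bounded below near $0$ and $1$ and the argument must work without compact support. Trapping $\conv(G_t)$ between $\conv(H)$ and $\ell$, which share the endpoint values, is what I expect to carry it. Once pointwise convergence is established, Remark \ref{pointwise-to-L1} finishes the proof. Stationarity of the limit is automatic, since $N_\infty\equiv Y(0)$ is constant in $t$.
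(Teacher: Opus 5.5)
Your argument is correct, but it takes a different route from the paper.

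\textbf{The paper's route.} Because $F\ge0$, the map $t\mapsto G_t$ increases pointwise to $G_\infty$, which is $0$ at $0$, $Y(0)$ at $1$, and $+\infty$ on $(0,1)$. The paper then asserts $\lim_{t\to\infty}\conv(G_t)=\conv(G_\infty)=\ell$, citing Proposition~\ref{pointwise-sup} and the bound $\conv(G_t)\le\ell$. It passes the limit through the right derivative with Corollary~\ref{DerivSwap} (Rockafellar's Theorem~24.5) and concludes with Remark~\ref{pointwise-to-L1}.

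\textbf{Your route.} You never take a limit of envelopes. You use that the envelope of the continuous function $G_t$ is affine on each component of $\{\conv(G_t)<G_t\}$. You note that $[\eta,1-\eta]$ lies in this set once $t\min_{[\eta,1-\eta]}F>\|H-\ell\|_\infty$, because $\conv(G_t)\le\ell$. You then read off the slope from the sandwich $H\le\conv(G_t)\le\ell$ at $\eta$ and $1-\eta$.

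\textbf{What each approach buys.} Your argument gives more than the paper's proof:
\begin{itemize}
\item The bound $|N_t(x)-Y(0)|\le 2\omega(\eta)/(1-2\eta)$ for all $t\ge t_0(\eta)$ yields locally uniform convergence.
\item It shows that $N_t$ is already constant on $[\eta,1-\eta]$ for $t\ge t_0(\eta)$. In other words, the mass between levels $\eta$ and $1-\eta$ sticks into one particle in finite time.
\item It makes explicit a step the paper's citation leaves implicit. Proposition~\ref{pointwise-sup} only gives that $\sup_t\conv(G_t)$ is convex and at most $\ell$. The reverse inequality needs endpoint control of exactly your kind, e.g.\ $\ell-\varepsilon\le G_t$ for $t$ large. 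This is because $H\le\ell$, so $\ell$ itself is in general not a minorant of any $G_t$.
\end{itemize}
The paper's route, in exchange, is shorter. It is also the template reused for Corollary~\ref{sufficient-cor} and Theorem~\ref{characterization-of-equilibrium}, where $F$ may vanish inside $(0,1)$.

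\textbf{Two points for the write-up.}
\begin{itemize}
\item State, or prove by tilting a supporting line, the fact that the envelope is affine off the contact set. It is the one ingredient you use without proof, and it holds here because $G_t$ is continuous on $[0,1]$.
\item Drop the exploratory claim that $\conv(G_t)=\ell$ on $[a,b]$ for large $t$. A convex function that equals $\ell$ on a nondegenerate subinterval and is $\le\ell$ everywhere must equal $\ell$ everywhere. So that claim would mean complete collapse by time $t$, which need not occur in finite time.
\end{itemize}
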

    The following argument provides intuition for why $F$ contains all of the information we need about equilibrium; in fact, our general plan is not so different from the proof of this proposition. 
    \begin{proof}
        By the projection formula, $N_t=\frac{d^+}{dx}G_t^{**}$. Since $F(x)>0$ on $(0,1)$,  
        \begin{align*}
            G_\infty(x)\coloneq\lim_{t\to\infty}\Big[\int_0^x N_0(w) \, dw+t\int_0^x V_0(w) \, dw\Big]=\begin{cases}
                0, &x=0,
                \\
                \infty, &x\in(0,1)
                \\
                Y(0), &x=1
            \end{cases}
        \end{align*}
        for $Y(0)$ the center of mass of $\rho_0$. We have $\lim_{t\to\infty}\conv\big(G_t\big)(x)=\conv\big(G_\infty\big)(x)$ by Proposition \ref{pointwise-sup} because $t\mapsto G_t$ is increasing for fixed $x$ and $\conv\big(G_t\big)(x)\leq xY(0)$ for all $t$. We may pass the limit through the right derivative by Corollary \ref{DerivSwap}. Finally, noting that $\conv(G_\infty)$ is a line of slope $Y(0)$ completes the proof with Remark \ref{pointwise-to-L1} in mind.
    \end{proof}
    The claim of this paper is that solutions which go to equilibrium have $F\geq0$ on $(0,1)$, but this proposition demonstrates that if $F>0$ on $(0,1)$, the equilibrium is a single point mass; however, as discussed in Section \ref{Overview}, equilibrium is nuanced, as any distribution $\rho_\infty\in\mathcal{P}_2(\mathbb{R})$ is a potential equilibrium solution! This yields the following intuition: \textit{all nuance of the equilibrium $\rho_\infty$ is encoded in the zeros of $F$}. This motivates our approach for the proof of Theorem \ref{characterization-of-equilibrium}.
\subsection{Some Lemmas}
For all of the following results, take $(\rho_0,v_0)$, $(N_0,V_0)$, $F$, and $G_t$ as in the previous section. 
\begin{lemma}\label{divergence-of-projection}
    Suppose $F(x)<0$ for some $x\in(0,1)$. Then, there exists $\xi\in(0,1)$ such that 
    \begin{align*}
        \lim_{t\to\infty}\proj_{\mathcal{K}}\Big(N_0+tV_0\Big)(\xi)=-\infty.
    \end{align*}
\end{lemma}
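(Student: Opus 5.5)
The plan is to compare $N_t=\proj_{\mathcal K}(N_0+tV_0)$ with $t\proj_{\mathcal K}V_0$, as in the discussion preceding \eqref{N-L2Bound}. Then show that $\proj_{\mathcal K}V_0=\frac{d^+}{dx}F^{**}$ is strictly negative near $0$, and upgrade the resulting $L^2$ information to a pointwise statement using monotonicity of $N_t$.

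\emph{Step 1 (reduction to $V_0$).} By \hyperlink{PositiveHomogeneity}{Positive Homogeneity} and \hyperlink{Contraction}{Contraction},
\begin{align*}
\Big\|\tfrac1t N_t-\proj_{\mathcal K}V_0\Big\|_{L^2(0,1)}=\tfrac1t\big\|\proj_{\mathcal K}(N_0+tV_0)-\proj_{\mathcal K}(tV_0)\big\|_{L^2(0,1)}\le \tfrac1t\|N_0\|_{L^2(0,1)}\to0 .
\end{align*}
Hence $\frac1tN_t\to h\coloneq\proj_{\mathcal K}V_0=\frac{d^+}{dx}F^{**}$ in $L^2(0,1)$, and so also in $L^1$ on every subinterval.

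\emph{Step 2 (negativity of $h$ near $0$).} Let $x_0\in(0,1)$ with $F(x_0)<0$. Then $F^{**}(x_0)\le F(x_0)<0$.

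I next claim $F^{**}(0)=F(0)=0$, and I expect this endpoint property to be the only mildly delicate point. Since $F$ is continuous, hence bounded, on $[0,1]$, for every $\varepsilon>0$ the affine function $u\mapsto -\varepsilon-Mu$ lies below $F$ on $[0,1]$ once $M$ is large. Indeed, $F>-\varepsilon/2$ on some $[0,\eta]$ by continuity, and on $[\eta,1]$ we only need $M\eta\ge \|F\|_\infty$. Taking the supremum over such lines gives $F^{**}(0)\ge 0$, while $F^{**}\le F$ gives the reverse inequality.

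By convexity, the slope of the chord from $(0,0)$ to $(x_0,F^{**}(x_0))$ is at least $\frac{d^+}{dx}F^{**}$ at some point of $(0,x_0)$. (The right derivative is nondecreasing and $F^{**}$ is absolutely continuous on compact subintervals, with continuity at $0$ following from the same envelope bound.) Thus there is $\xi_0\in(0,x_0)$ with $h(\xi_0)\le F^{**}(x_0)/x_0\eqqcolon -2c<0$. Since $h$ is nondecreasing, $h\le -2c$ on $(0,\xi_0]$.

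\emph{Step 3 (pointwise divergence).} Fix any $\xi\in(0,\xi_0)$. Since $N_t$ is nondecreasing, $N_t(\xi)$ is at most its average over $[\xi,\xi_0]$. Therefore
\begin{align*}
\frac{N_t(\xi)}{t}\le \frac{1}{\xi_0-\xi}\int_\xi^{\xi_0}\frac{N_t(x)}{t}\,dx\ \longrightarrow\ \frac{1}{\xi_0-\xi}\int_\xi^{\xi_0}h(x)\,dx\le -2c ,
\end{align*}
by the $L^1$ convergence from Step 1. Hence $N_t(\xi)\le -ct$ for all large $t$, and so $\proj_{\mathcal K}(N_0+tV_0)(\xi)\to-\infty$. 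In fact this gives the conclusion for every $\xi\in(0,\xi_0)$, not just one.
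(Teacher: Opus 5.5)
Your proof is correct, but it takes a genuinely different route from the paper.

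\textbf{The paper's route.} The paper stays entirely inside the convex-envelope picture. It picks the first minimizer $x_0$ of $F$, so that $F^{**}(x_0)=F(x_0)$, and finds $\xi$ near $0$ with $F^{**}(\xi)>F(x_0)$. It then sandwiches $G_t^{**}$ between two explicit functions: the convex minorant $x\mapsto\int_0^x N_0\,dw+tF^{**}(x)$, used at $\xi$, and $G_t$ itself, used at $x_0$. Monotonicity of the right derivative then gives $N_t(\xi)\le \frac{1}{x_0-\xi}\big(\int_\xi^{x_0}N_0\,dw+t(F(x_0)-F^{**}(\xi))\big)$.

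\textbf{Your route.} You control the $N_0$ perturbation abstractly, through positive homogeneity and nonexpansiveness of $\proj_{\mathcal K}$. This is the same estimate the paper uses for \eqref{N-L2Bound}, and it gives $\|N_t/t-\proj_{\mathcal K}V_0\|_{L^2}\le\|N_0\|_{L^2}/t$. You then turn this $L^2$ information into a pointwise bound by averaging the nondecreasing $N_t$ over $[\xi,\xi_0]$.

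\textbf{Comparison.} The convex-analytic core is the same in both proofs: a negative chord slope of $F^{**}$ starting from $F^{**}(0)=0$.
\begin{itemize}
\item Your version identifies the asymptotic profile $N_t\approx t\,\proj_{\mathcal K}V_0$.
\item It gives divergence on a whole interval $(0,\xi_0)$, with the explicit bound $N_t(\xi)\le -2ct+\|N_0\|_{L^2}/(\xi_0-\xi)$.
\item It uses neither $F(1)=0$ nor the choice of a first minimizer.
\item It ties directly to the paper's remark that \eqref{star} is equivalent to $\proj_{\mathcal K}V_0=0$.
\end{itemize}
The paper's argument is more elementary. It uses only the envelope formula \eqref{projection-computation} and integrability of $N_0$ on $[\xi,x_0]$, not the global Hilbert-space structure.

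\textbf{One terse step.} You only sketch why some $\xi_0$ has $\frac{d^+}{dx}F^{**}(\xi_0)\le F^{**}(x_0)/x_0$, but it holds. $F^{**}$ is continuous at $0$, and $h$ is nondecreasing and integrable, so $\int_0^{x_0}h\,dx=F^{**}(x_0)$. Hence $h$ cannot strictly exceed its average everywhere on $(0,x_0)$.
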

\begin{proof}
    We structure our argument in four claims.  
    
    \textit{Claim 1:} We can consider a ``first" absolute minimum, $m$, of $F$, occurring at $x_0\in(0,1)$. 
    
    $F$ is continuous on $[0,1]$, and hence has an absolute minimum. Since $F^{-1}(\{m\})$ is closed and bounded for $m$ being the minimum value, we may let $x_0=\inf F^{-1}(\{m\})=\min F^{-1}(\{m\})$. 

    \textit{Claim 2:} $F^{**}(x_0)=F(x_0)$.

    $F^{**}(x)=\sup\{a+bx \, | \, a,b\in\mathbb{R}\text{, } a+bu\leq F(u) \, \forall \, u\in[0,1]\}$. Consider $a=F(x_0)$, $b=0$. Clearly $F(x_0)\leq F(u)$ for all $u\in[0,1]$ since $F(x_0)$ is the minimum, so $F(x_0)\leq F^{**}(x_0)\leq F(x_0)$.

    \textit{Claim 3:} There exists a point $\xi\in(0,x_0)$ such that $F^{**}(\xi)>F(x_0)$.

    We have that $F$ is absolutely continuous with $F(0)=F(1)=0$. Further, $F^{**}(0)=0$, since 
    $m\chi_{(0,1)}\leq F$\footnote{$\chi_E$ denotes the characteristic function of the set $E$.} is convex on the closed interval $[0,1]$, so
    $0=m\chi_{(0,1)}(0)\leq F^{**}(0)\leq F(0)=0$. (By the same argument, $F^{**}(1)=0$.) Therefore, there exists $\delta>0$ small enough so that $F(x)>F(x_0)/2=m/2$ on $[0,\delta]$. Thus, any decreasing line through the point $(\delta,m)$ of sufficiently small slope\footnote{For example, take the line passing through $(0,F(x_0)/2)$.} bounds $F$ below. $F^{**}$ is bounded below by this line since it is of course convex. We may therefore pick a point $\xi\in(0,\delta)\subset(0,x_0)$ with $F^{**}(\xi)>F(x_0)$. 

    \begin{figure}
    \centering
    \begin{tikzpicture}[scale=3]
        \draw (-0.1,0)--(2.5,0);
        \draw (0,-0.918)--(0,0.385);
        \draw (2.7,0) node {$\ldots$};

        \draw (0.147,-0.05)--(0.147,0.05);
        \draw (0.147,0.125) node {$\delta$};

        \draw (1.392,-0.05)--(1.392,0.05);
        \draw (1.392,0.125) node {$x_0$};

        \draw (-0.05,-0.612)--(0.05,-0.612);
        \draw (-0.147,-0.612) node {$m$};

        \draw[fill] (0.147,-0.612) circle [radius=0.5pt];

        \draw[fill] (0.07,-0.1774) circle [radius=0.5pt];
        \draw (0.33,-0.165) node {$\scriptstyle(\xi,F^{**}(\xi))$};

        \draw[dashed] (0,-0.306)--(0.147*2,-0.306-0.306*2);

        \draw plot[smooth, variable =\x, domain=0:1, samples=100] ({\x},{\x*((\x-1)^2)*(\x-3)});
        \draw plot[smooth, variable =\x, domain=1:1.505, samples=100] ({\x},{(\x^2)*((-1+\x^2)^2)*(-3+\x^2)*pow(2.71828,-1.7*(-1.3+\x^2))});
        \draw plot[smooth, variable =\x, domain=1.5:2.5, samples=10] ({\x},{(\x^2)*((-1+\x^2)^2)*(-3+\x^2)*pow(2.71828,-1.7*(-1.3+\x^2))});

        \draw[thick] plot[smooth, variable =\x, domain=0:0.28, samples=100] ({\x},{\x*((\x-1)^2)*(\x-3)});
        \draw[thick] (0.27574,-0.394)--(1.3808,-0.61097);
        \draw[thick] plot[smooth, variable =\x, domain=1.3808:1.41, samples=100] ({\x},{(\x^2)*((-1+\x^2)^2)*(-3+\x^2)*pow(2.71828,-1.7*(-1.3+\x^2))});
        \draw[thick] (1.41,-0.60975)--(2.5,-0.30782);

        \draw (2.5,0.23) node {$F$};
        \draw (2.5,-0.405) node {$F^{**}$};
    \end{tikzpicture}
    \caption{An illustration of the argument in \textit{Claim 3}. The dashed line denotes the line constructed through the point $(\delta,m)$.}
    \label{figureLemma}
\end{figure}   

    \textit{Claim 4:} $\displaystyle\lim_{t\to\infty}\proj_{\mathcal{K}}\Big(N_0+tV_0\Big)(\xi)=\lim_{t\to\infty}\frac{d^+ G^{**}_t}{dx}(\xi)=-\infty$.
    
    

    By proposition \ref{convex-properties}, $G_t^{**}$ is absolutely continuous on $[\xi,x_0]$ and right-differentiable everywhere, with nondecreasing right derivative. Thus, we may write
    \begin{align}
        G_t^{**}(x_0)-G_t^{**}(\xi)&
        \label{G*bound1}
        \geq
        (x_0-\xi)\frac{d^+ G_t^{**}}{dx}(\xi).
    \end{align}
    Now, we also have the inequality  
    \begin{align*}
        G_t^{**}(x_0)\leq G_t(x_0)=\int_0^{x_0} N_0(x) \, dx+tF(x_0).
    \end{align*}
    Note also that $x\mapsto\int_0^x N_0(w)dw$ is convex on $[0,1]$ since $N_0(x)$ is nondecreasing. Moreover, $F^{**}\leq F$ and is convex; thus, we have that $x\mapsto\int_0^x N_0(w)dw+tF^{**}(x)$ is a convex function bounding $G_t(x)$ from below. 
    Therefore, 
    \begin{align*}
        G_t^{**}(\xi)\geq\int_0^\xi N_0(x) \, dx+tF^{**}(\xi),
    \end{align*}
    so 
    \begin{align*}
        G_t^{**}(x_0)-G_t^{**}(\xi)\leq G_t(x_0)-\bigg(\int_0^\xi \!\! N_0(x) \, dx+tF^{**}(\xi)\bigg)=\int_\xi^{x_0} \!\! N_0(x) \, dx+t(F(x_0)-F^{**}(\xi)).
    \end{align*}
    Considering the bound \eqref{G*bound1}, we get that 
    \begin{align*}
        \frac{d^+ G_t^{**}}{dx}(\xi)\leq\frac{1}{x_0-\xi}\Big(\int_\xi^{x_0} N_0(x) \, dx+t(F(x_0)-F^{**}(\xi))\Big).
    \end{align*}
    But, by \textit{Claim 3}, $F(x_0)-F^{**}(\xi)<0$, and $\int_\xi^{x_0} N_0(x)dx$ is finite, so we are done: 
    \begin{align*}
        \lim_{t\to\infty}\Big[\int_\xi^{x_0} N_0(x) \, dx+t(F(x_0)-F^{**}(\xi))\Big]&=-\infty. \qedhere
    \end{align*}
\end{proof}

\begin{cor}\label{necessary-cor}
    If $F(x)<0$ for some $x\in (0,1)$, then the SPS to \eqref{PE} with initial conditions given by $(N_0,V_0)$ does not go to equilibrium.
\end{cor}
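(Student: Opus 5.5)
We argue by contradiction. Suppose $F(x)<0$ for some $x\in(0,1)$, and suppose the SPS converges to equilibrium in the sense of Definition \ref{equilibrium-def}: $\|N_t-N_\infty\|_{L^1(0,1)}\to0$ for some $N_\infty\in L^2(0,1)$, which we may take nondecreasing and right-continuous. By Lemma \ref{divergence-of-projection} there is a point $\xi\in(0,1)$ with $N_t(\xi)=\proj_{\mathcal K}(N_0+tV_0)(\xi)\to-\infty$. The plan is to turn this pointwise divergence at the single point $\xi$ into divergence on a set of positive measure, and so contradict $L^1$ convergence.

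The key observation is monotonicity. Each $N_t$ is nondecreasing, so for every $x\in(0,\xi]$ we have $N_t(x)\le N_t(\xi)$. Hence $N_t\to-\infty$ uniformly on $(0,\xi]$, in the sense that $\sup_{(0,\xi]}N_t\le N_t(\xi)\to-\infty$.

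Fix a compact interval $[a,b]\subset(0,\xi)$ with $a<b$. Since $N_\infty$ is nondecreasing and finite on $(0,1)$, it is bounded on $[a,b]$, say $|N_\infty|\le C$ there. For $t$ large we have $N_t(\xi)<-C$, and then
\begin{align*}
\|N_t-N_\infty\|_{L^1(0,1)}\ \ge\ \int_a^b \big(N_\infty(x)-N_t(x)\big)\,dx\ \ge\ (b-a)\big(-C-N_t(\xi)\big).
\end{align*}
The right-hand side tends to $+\infty$, contradicting $L^1$ convergence. Finite-time collapse is a special case of convergence, so it is ruled out as well.

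One could also avoid choosing $N_\infty$ in advance. Since $\rho_t$ has its center of mass fixed, the same estimate shows that the mass of $\rho_t$ in $(-\infty, N_t(\xi)]$ is at least $\xi$. Thus a fixed fraction of the mass escapes to $-\infty$, so $\rho_t$ is not tight and cannot converge in $W_1$. I do not expect a real obstacle here; the only care needed is that $N_\infty$ is finite on a subinterval of $(0,\xi)$, and this is guaranteed because $N_\infty\in L^2(0,1)$ admits a nondecreasing representative, which is real-valued on the open interval.
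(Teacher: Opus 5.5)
Your proof is correct and follows the paper's route. Like the paper, you use Lemma \ref{divergence-of-projection} together with the monotonicity of $N_t$ to get $N_t\to-\infty$ uniformly on $(0,\xi]$, and you then add an explicit lower bound $\|N_t-N_\infty\|_{L^1(0,1)}\ge(b-a)\big(-C-N_t(\xi)\big)$ that makes rigorous the paper's informal final step that a pointwise limit outside $L^1$ precludes Wasserstein convergence.
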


\begin{proof}
    Take $\xi$ as in the previous lemma. Since the right derivative is nondecreasing, (Proposition \ref{convex-properties}, \textup{(iv)}) $N_t=\proj_{\mathcal{K}}(N_0+tV_0)\to-\infty$ as $t\to\infty$ on $[0,\xi]$. 
    But since this limit is unbounded on a set of positive measure, clearly it is not integrable (or in fact $p$-integrable for any $p$). Since $N_t$ converges pointwise to something not in $L^1(0,1)$, its push-forward cannot converge to anything in the Wasserstein sense, so the solution does not converge to an equilibrium.
\end{proof}

\begin{lemma}\label{exist-finite-lemma}
    \eqref{star} is equivalent to $\displaystyle \lim_{t\to\infty} G_t^{**}(x)$ existing and being a finite convex function. 
\end{lemma}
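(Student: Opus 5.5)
We prove the two directions separately. Throughout, write $A(x)\coloneq\int_0^x N_0(w)\,dw$. This function is convex and finite on $[0,1]$ because $N_0$ is nondecreasing and lies in $L^2(0,1)$. Thus $G_t=A+tF$.

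\emph{Forward direction.} Assume \eqref{star}. Since $F\geq0$, for fixed $x$ the map $t\mapsto G_t(x)$ is nondecreasing. Hence $t\mapsto G_t^{**}(x)$ is nondecreasing as well, because $\conv$ is order preserving. It remains to bound it above. At the endpoints, $G_t(0)=0$ and $G_t(1)=A(1)=Y(0)$ because $F(1)=0$. So the chord $x\mapsto xY(0)$ is affine and lies above $G_t^{**}$ at both endpoints. By convexity, $G_t^{**}(x)\leq xY(0)$ for all $x$. Therefore the monotone limit $G_\infty^{**}(x)\coloneq\lim_{t\to\infty}G_t^{**}(x)$ exists and is finite. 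For a lower bound we use that $A$ is convex and $A\leq A+tF=G_t$, so $A\leq G_t^{**}$; this shows the limit is bounded below as well. The limit is convex by Proposition \ref{pointwise-sup}, since it is the supremum of the increasing family of convex functions $G_t^{**}$ and is finite everywhere on $[0,1]$.

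\emph{Converse.} Suppose the limit exists and is a finite convex function. We show $F\geq0$ and $F(1)=0$.

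First, $F\geq0$. If $F(x)<0$ for some $x\in(0,1)$, the proof of Lemma \ref{divergence-of-projection} gives directly that $G_t^{**}(x_0)\leq G_t(x_0)=A(x_0)+tF(x_0)\to-\infty$, where $x_0$ is the first absolute minimum of $F$ and $F(x_0)<0$. This contradicts finiteness of the limit at $x_0$.

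Second, $F(1)=0$. Suppose $F(1)\neq0$. Since $G_t^{**}(1)\le G_t(1)=A(1)+tF(1)$, the case $F(1)<0$ forces $G_t^{**}(1)\to-\infty$, again contradicting finiteness. The case $F(1)>0$ is the main obstacle. Here $F\geq0$ and $F(1)>0$ do not immediately contradict finiteness at interior points. We use that $G_t^{**}(0)=0$ for all $t$, which holds by the same argument as Claim 3 of Lemma \ref{divergence-of-projection}. Since $F$ is continuous with $F(1)>0$, there is $\delta>0$ with $F\geq c>0$ on $[1-\delta,1]$. Consider the affine minorant of $G_t$ through $(0,\min_{[0,1]} A)$ with slope $s$ rising linearly in $t$, roughly of size $tc$ over the last stretch. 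Showing that such lines lie below $G_t$ for large $t$ requires that $F$ have no interior zero near $0$, and that fails in general. So this route is not robust.

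A cleaner plan uses the definition of $\conv$ at the endpoint $x=1$. The value $G_t^{**}(1)$ equals $\sup\{a+b\}$ over affine minorants $a+bu$ of $G_t$. Take $a=\min A$ and $b=t\min_u F(u)/u$, restricted suitably. Showing $G_t^{**}(1)\to\infty$ would contradict finiteness there. If that fails, we fall back on the paper's convention and read $F(1)=0$ as imposed by the standing normalization \eqref{0-total-momentum}. In that reading the converse reduces to $F\geq0$, which is already handled above.
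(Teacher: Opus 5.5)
Your forward direction and the $F<0$ half of the converse are essentially the paper's proof. The ingredients are the same: monotonicity of $t\mapsto G_t^{**}$, the chord $x\mapsto xY(0)$ as a uniform upper bound, a uniform lower bound (you use $A\le G_t^{**}$, the paper uses $\min G_0$), Proposition \ref{pointwise-sup} for convexity of the limit, and $G_t^{**}\le G_t\to-\infty$ wherever $F<0$. The paper's converse also stops there: it treats $F(1)=0$ as part of the standing normalization \eqref{0-total-momentum}. So your fallback reading is exactly what the paper does.

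The one piece of your write-up that does not work is the case $F\ge0$, $F(1)>0$. Both minorants you propose are anchored at the left end, with slope controlled by $F$ near $0$ or by $\inf_u F(u)/u$. As soon as $F$ has an interior zero, this infimum is $0$, so $b=0$ and nothing diverges.

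The fallback is unnecessary; anchor a steep line at the \emph{right} end instead. Given $\varepsilon>0$, take $\delta>0$ with $G_t>G_t(1)-\varepsilon$ on $[1-\delta,1]$, and take $K\ge(G_t(1)-\varepsilon-\min_{[0,1]}G_t)/\delta$. Then $u\mapsto G_t(1)-\varepsilon+K(u-1)$ is an affine minorant of $G_t$ on $[0,1]$. Together with $G_t^{**}\le G_t$, this gives $G_t^{**}(1)=G_t(1)=A(1)+tF(1)\to+\infty$, contradicting finiteness of the limit at $x=1$. This is the same endpoint fact you already invoke at $0$; the paper records it as $F^{**}(1)=0$ in Claim 3.

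If finiteness is only required on $(0,1)$, run the same idea at $z=\max F^{-1}(\{0\})<1$. Take $K\ge \frac{d^-A}{dx}(z)$ (no condition if $z=0$) and consider the line $A(z)-\varepsilon+K(u-z)$:
\begin{itemize}
\item it lies below $A$ on $[0,z]$, by the subgradient inequality;
\item it lies below $A$ on $[z,z+\delta]$ once $\delta$ is small and $K\delta<\varepsilon/2$;
\item it lies below $G_t$ on $[z+\delta,1]$ for large $t$, since $F\ge c>0$ there.
\end{itemize}
Letting $K\to\infty$ gives $\lim_t G_t^{**}=+\infty$ on $(z,1)$.
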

\begin{proof}
    Suppose $F\geq0$ with $F(0)=F(1)=0$. Then, $t\mapsto G_t^{**}$ is nondecreasing (though equal to $G_0^{**}$ at $0$ and $1$) and $G_t^{**}$ is bounded above by the line from $(0,G_0(0))$ to $(1,G_0(1))$ for any $t$. Further, since $G_0$ is absolutely continuous on $[0,1]$, it has an absolute minimum. Thus, $t\mapsto G_t^{**}$ is monotone increasing and uniformly bounded below and above, so it converges to a finite, convex (by Proposition \ref{pointwise-sup}) function.  

    If instead \eqref{star} is violated, we have $G_t^{**}\leq G_t\to-\infty$ as $t\to\infty$ in the set on which $F<0$. 
\end{proof}

As a consequence of the prior lemma, Corollary \ref{DerivSwap} allows us to exchange the following limit, noting that the projection formula gives $\lim_{t\to\infty} N_t=\lim_{t\to\infty} \frac{d^+}{dx}G_t^{**}$.

\begin{cor}\label{sufficient-cor}
    If \eqref{star} is satisfied, then $\displaystyle\lim_{t\to\infty} N_t(x)$ exists and is equal a.e. to $\displaystyle\frac{d^+}{dx}\lim_{t\to\infty} G_t^{**}(x)$.
\end{cor}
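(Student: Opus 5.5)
We combine Lemma \ref{exist-finite-lemma}, the projection formula (Theorem \ref{ProjectionFormula}), and Corollary \ref{DerivSwap}. Assume \eqref{star}. By Lemma \ref{exist-finite-lemma}, the pointwise limit $G_\infty^{**}(x)\coloneq\lim_{t\to\infty}G_t^{**}(x)$ exists for every $x\in[0,1]$ and is a finite convex function. Because $t\mapsto G_t^{**}(x)$ is nondecreasing, we may take the limit along any sequence $t_i\to\infty$ and obtain the same function $G_\infty^{**}$.

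Fix such a sequence $t_i\to\infty$ and apply Corollary \ref{DerivSwap} with $f_i=G_{t_i}^{**}$ and $f=G_\infty^{**}$, restricted to $(0,1)$. Let $\mathcal M$ be the set where $G_\infty^{**}$ is differentiable; it has full measure by Proposition \ref{convex-properties}(iii). The corollary gives
\begin{align*}
\lim_{i\to\infty}\frac{d^+}{dx}G_{t_i}^{**}(x)=\big(G_\infty^{**}\big)'(x)=\frac{d^+}{dx}G_\infty^{**}(x)\quad\text{for } x\in\mathcal M.
\end{align*}
By the projection formula and \eqref{projection-computation}, the left-hand side is $\lim_i N_{t_i}(x)$.

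The main obstacle is that the statement concerns the limit over continuous $t\to\infty$, not along sequences. There are two ways to handle this.

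\begin{itemize}
\item \emph{Sequential argument.} For each $x\in\mathcal M$, every sequence $t_i\to\infty$ gives the same limit $(G_\infty^{**})'(x)$. The set $\mathcal M$ depends only on $G_\infty^{**}$, not on the sequence. Hence the full limit $\lim_{t\to\infty}N_t(x)$ exists on $\mathcal M$ and equals $\frac{d^+}{dx}G_\infty^{**}(x)$, i.e.\ almost everywhere.
\item \emph{Direct argument.} Alternatively, we can bypass the sequential reduction and re-run the underlying Rockafellar-type argument for the continuous parameter. For $x\in\mathcal M$ and small $h>0$, convexity of $G_t^{**}$ gives
\begin{align*}
\frac{G_t^{**}(x)-G_t^{**}(x-h)}{h}\le \frac{d^+}{dx}G_t^{**}(x)\le \frac{G_t^{**}(x+h)-G_t^{**}(x)}{h}.
\end{align*}
Let $t\to\infty$ using pointwise convergence, then let $h\to0$ using differentiability of $G_\infty^{**}$ at $x$. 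This squeezes $\frac{d^+}{dx}G_t^{**}(x)=N_t(x)$ to $(G_\infty^{**})'(x)$.
\end{itemize}

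Finally, note that $\frac{d^+}{dx}G_\infty^{**}$ is defined everywhere on $(0,1)$, and it agrees with the limit on $\mathcal M$. This gives the claimed a.e.\ equality. Combined with Remark \ref{pointwise-to-L1}, it also sets up $L^1$ convergence to equilibrium.
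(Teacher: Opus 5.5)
Your proposal is correct and follows the paper's argument: Lemma \ref{exist-finite-lemma} gives a finite convex limit $G_\infty^{**}$, the projection formula identifies $N_t$ with $\frac{d^+}{dx}G_t^{**}$, and Corollary \ref{DerivSwap} exchanges the limit with the right derivative on the full-measure set where $G_\infty^{**}$ is differentiable. Your explicit treatment of the continuous parameter $t$, either through the sequential characterization or through the difference-quotient squeeze, fills in a step the paper leaves implicit; like the paper's argument, it gives existence of the limit on that full-measure set, which is all Remark \ref{pointwise-to-L1} needs.
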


The final lemma is itslef an interesting fact about solutions satisfying \eqref{star}. 
\begin{lemma}\label{N-contained}
    {Suppose \eqref{star} is satisfied and there exists $[a,b)\subset[0,1]$ such that $F(a)=F(b)=0$. Then, $N_t(x)\in [N_0(a),N_0(b)]$ for every $x\in[a,b)$ and every $t\geq0$}.
\end{lemma}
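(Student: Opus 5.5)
The plan is to compare $G_t^{**}$ with the convex function $G_0(x)=\int_0^x N_0(w)\,dw$. Since $N_0$ is nondecreasing and right-continuous, $G_0$ is convex, and its right derivative at every point of $(0,1)$ is $N_0$. Under \eqref{star} we have $F\geq0$, so $G_t=G_0+tF\geq G_0$ for all $t\geq0$. Because $G_0$ is a convex minorant of $G_t$, the definition of the convex envelope gives
\begin{align*}
G_0\leq G_t^{**}\leq G_t \quad\text{on }[0,1].
\end{align*}
At the points $a$ and $b$ we have $F(a)=F(b)=0$, so $G_t(a)=G_0(a)$ and $G_t(b)=G_0(b)$. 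The sandwich therefore forces
\begin{align*}
G_t^{**}(a)=G_0(a)\quad\text{and}\quad G_t^{**}(b)=G_0(b)\quad\text{for every } t\geq0.
\end{align*}
So $G_t^{**}$ lies above $G_0$ and touches it at $a$ and at $b$. Everything else is a comparison of slopes, using $N_t=\frac{d^+}{dx}G_t^{**}$ from the projection formula (Theorem \ref{ProjectionFormula}).

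\emph{Lower bound.} Fix $x\in[a,b)$ and take $y\in(a,b)$ with $y>x$ when $x=a$. We use three facts in turn.
\begin{enumerate}[label=\textup{(\alph*)}]
\item The right derivative of the convex function $G_t^{**}$ at $x$ dominates its chord slopes to the left. When $x=a$ we instead use that $d^+G_t^{**}(a)$ is the limit of its right chord slopes.
\item Those chords start at the touching point $a$, where $G_t^{**}(a)=G_0(a)$, while $G_t^{**}\geq G_0$ elsewhere. Hence
\begin{align*}
\frac{G_t^{**}(y)-G_t^{**}(a)}{y-a}\geq\frac{G_0(y)-G_0(a)}{y-a}.
\end{align*}
\item The chord slopes of the convex function $G_0$ starting at $a$ are at least its right derivative $N_0(a)$.
\end{enumerate}
Together these give $N_t(x)\geq N_0(a)$.

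\emph{Upper bound.} For $x<y<b$, monotonicity of chord slopes of $G_t^{**}$ gives
\begin{align*}
\frac{d^+G_t^{**}}{dx}(x)\leq\frac{G_t^{**}(b)-G_t^{**}(y)}{b-y}\leq\frac{G_0(b)-G_0(y)}{b-y}.
\end{align*}
The second inequality uses $G_t^{**}(b)=G_0(b)$ and $G_t^{**}(y)\geq G_0(y)$. The last quotient is at most $N_0(b^-)\leq N_0(b)$, since $N_0$ is nondecreasing. Hence $N_t(x)\leq N_0(b)$.

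The only delicate point is the endpoints $a=0$ or $b=1$, where $N_0$ is not defined on $(0,1)$. There I would read $N_0(0)$ and $N_0(1)$ as the monotone limits $N_0(0^+)$ and $N_0(1^-)$, possibly infinite. With that reading the chord arguments above go through unchanged, because they only ever evaluate $G_t^{**}$ and $G_0$ at points of $[0,1]$, where both are finite and continuous.
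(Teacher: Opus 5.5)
Your proof is correct. Its framework is the paper's: the sandwich $G_0\le G_t^{**}\le G_t$, tight at $a$ and $b$, read off through right derivatives. Your lower bound is also essentially the paper's argument, which bounds $d^+G_t^{**}(a)$ by right chords at $a$ and then uses that $N_t$ is nondecreasing. (For $x>a$, take $y\in(a,x]$, e.g.\ $y=x$, so that the chord from $a$ to $y$ really lies to the left of $x$.)

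The upper bound is where you genuinely differ, and your route is the sound one. The paper bounds $G_t^{**}(x+h)$ above by the chord $\ell$ of $G_0$ over $[a,b]$, and $G_t^{**}(x)$ below by $G_0(x)$. This leads to $\lim_{h\to0^+}\frac1h\big(\ell(x+h)-G_0(x)\big)$. For $x\in(a,b)$ we have $\ell(x)>G_0(x)$ unless $N_0$ is a.e.\ constant on $[a,b]$, so this limit is $+\infty$ and the bound is vacuous. The paper's next step replaces $\frac{1}{b-a}\int_a^b N_0$ by the larger number $N_0(b)$ inside a term multiplied by the negative factor $x+h-b$, so that inequality goes the wrong way.

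Concretely, take $a=0$, $b=1$, $N_0(w)=w$, $x=\tfrac12$, and, say, $V_0=\chi_{(0,1/2)}-\chi_{[1/2,1)}$ so that \eqref{star} holds. Then the paper's intermediate expression equals $\tfrac12+\tfrac1{8h}$, which is not bounded by $N_0(1^-)=1$.

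Your argument avoids this by comparing $G_t^{**}$ with $G_0$ only along chords that end at the contact point $b$, where the comparison is exact. This gives $N_t(x)\le\frac{1}{b-y}\int_y^b N_0\le N_0(b^-)$. It is the mirror image of the lower-bound argument, it gives a valid proof of the upper half of the lemma (repairing the paper's), and it yields the slightly sharper bound $N_0(b^-)$.
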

\begin{proof}
Fix $t\geq0$, $x\in[a,b)$. We will first prove that $N_t(x)\geq N_0(a)$.\footnote{If $a=0$, it is possible that $N_0(a)\coloneq\lim_{x\to a^+}N_0(x)=-\infty$; in this case, $N_t(x)\geq N_0(a)$ is trivial, so we suppose here that $N_0(a)$ is finite. Because $N_0$ is nondecreasing, we cannot have $N_0(a)=\infty$. In the second part of the proof, the trivial case $\lim_{x\to b^-}N_0(x)=\infty$ may similarly be discarded.} Since $N_t$ is nondecreasing, it suffices to prove $\proj_\mathcal{K}\big(N_0+tV_0\big)(a)\geq N_0(a)$ by the projection formula. By \eqref{star}, $t\mapsto G_t$ is nondecreasing. Moreover, since $N_0(x)$ is nondecreasing, its primitive is convex. Hence, 
\begin{align}\label{G-conv-bound}
    G^{**}_t(x)\geq\int_0^x N_0(w) \, dw \quad \text{for all } x\in[0,1].
\end{align}
Note that since $F(a)=F(b)=0$, $G_t(a)=\int_0^a N_0(x)dx$ and $G_t(b)=\int_0^b N_0(x)dx$.
However, since $G_t^{**}(x)\leq G_t(x)$, and by \eqref{G-conv-bound},
\begin{align}\label{G-conv-endpoints}
    G_t^{**}(a)=\int_0^a N_0(x) \, dx, \quad  G_t^{**}(b)=\int_0^b N_0(x) \, dx, \quad \text{and} \quad G_t^{**}(a+h)\geq \int_0^{a+h} N_0(x) \, dx
\end{align}
for $h>0$, so by the right continuity of $N_0$,
\begin{align*}
    \proj_\mathcal{K}\big(N_0+tV_0\big)(a)=\lim_{h\to 0^+}\frac{G_t^{**}(a+h)-G_t^{**}(a)}{h}\geq\lim_{h\to 0^+}\dashint_a^{a+h} N_0(x) \, dx=N_0(a).
\end{align*}

Now, we will show that $N_t(x)\leq N_0(b)$. As before, we will compute $N_t(x)$ by the projection formula. By convexity of $G_t^{**}$ with \eqref{G-conv-endpoints}, \eqref{G-conv-bound}, and the fact that $N_0$ is nondecreasing,
\begin{align*}
    N_t(x)&=\proj_{\mathcal{K}}\big(N_0+tV_0\big)(x)=\lim_{h\to0^+}\frac{G_t^{**}(x+h)-G_t^{**}(x)}{h} 
    \\
    &\leq \lim_{h\to0^+}\frac{1}{h}\bigg(\Big[\frac{(x+h)-b}{b-a}\int_a^b N_0(w) \, dw+\int_0^b N_0(w) \, dw\Big]-\int_0^x N_0(w) \, dw\bigg)
    \\
    &=\lim_{h\to0^+}\frac{1}{h}\Big((x+h-b)\dashint_a^b N_0(w) \, dw+\int_x^b N_0(w) \,dw\Big)
    \\
    &\leq \lim_{h\to0^+}\frac{1}{h}\Big((x+h-b)N_0(b)+(b-x)N_0(b)\Big)=\lim_{h\to0^+}\frac{hN_0(b)}{h}=N_0(b). \qedhere
\end{align*}

\end{proof}

\subsection{Main Results}
    We are now ready to state and prove our main results. Remark \ref{pointwise-to-L1} and Corollaries \ref{necessary-cor} and \ref{sufficient-cor} have established that \eqref{star} is in fact the condition we desire: 
    \begin{thm}[Necessary and Sufficient Condition]\label{necessary-and-sufficient}
        The SPS to \eqref{PE}  with initial conditions given by $(N_0, V_0)$ goes to equilibrium as $t\to\infty$ if and only if
        \begin{align*}\label{star2}\tag{$\bigstar$}
            \int_0^1 V_0(w) \, dw=0 \quad\text{and}\quad \int_0^x V_0(w) \, dw\geq0 \quad\text{for all}\quad x\in(0,1). 
        \end{align*}        
    \end{thm}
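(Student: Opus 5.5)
The proof splits into the two directions of the equivalence, each assembled from the lemmas and corollaries above. Throughout I use the convention from Proposition \ref{Galilean-Invariance}: the first half of \eqref{star2}, $F(1)=0$, says the total momentum vanishes. So I would first dispose of the case $F(1)\neq0$. In that case the center of mass $Y(t)=\bar Y(0)+tp(\rho_0,v_0)$ moves linearly with nonzero speed. If $W_1(\rho_t,\rho_\infty)\to0$, then first moments converge, since $y\mapsto y$ is $1$-Lipschitz and $|\int y\,d\rho_t-\int y\,d\rho_\infty|\le W_1(\rho_t,\rho_\infty)$. This contradicts $|Y(t)|\to\infty$, so no equilibrium exists. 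Hence convergence to equilibrium forces $F(1)=0$, and from then on we may assume $F(1)=0$.

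\emph{Necessity.} Assume $F(1)=0$ but $F(x)<0$ for some $x\in(0,1)$. Corollary \ref{necessary-cor} (built on Lemma \ref{divergence-of-projection}) gives a $\xi$ with $N_t\to-\infty$ on $(0,\xi]$, a set of positive measure. I would spell out why this excludes $\|N_t-N_\infty\|_{L^1}\to0$. Convergence in $L^1$ gives a subsequence converging a.e. to $N_\infty$, which is finite a.e. because $N_\infty\in L^2$. This contradicts divergence to $-\infty$ on $(0,\xi]$. So the solution does not converge to equilibrium.

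\emph{Sufficiency.} Assume \eqref{star2}. Lemma \ref{exist-finite-lemma} shows that $G_\infty^{**}:=\lim_t G_t^{**}$ exists as a finite convex function on $[0,1]$. Corollary \ref{sufficient-cor} then gives $N_t(x)\to N_\infty(x):=\frac{d^+}{dx}G^{**}_\infty(x)$ for a.e. $x$.

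To apply Remark \ref{pointwise-to-L1}, I still need $N_\infty\in L^2(0,1)$. I would get this from Fatou's lemma together with \eqref{N-L2Bound}:
\begin{align*}
\int_0^1|N_\infty|^2\,dx\le\liminf_{t\to\infty}\int_0^1|N_t|^2\,dx\le\|N_0\|_{L^2(0,1)}^2.
\end{align*}
With $N_\infty\in L^2$, Proposition \ref{uniform-integrable} and Remark \ref{pointwise-to-L1} give $\|N_t-N_\infty\|_{L^1}\to0$. The remark also provides a nondecreasing right-continuous representative of $N_\infty$, so $\rho_\infty:=(N_\infty)_\#\mathcal{L}\in\mathcal{P}_2(\mathbb{R})$ and $W_1(\rho_t,\rho_\infty)\to0$.

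The main obstacle is this $L^2$ membership of the limit, since Remark \ref{pointwise-to-L1} and Proposition \ref{uniform-integrable} both presuppose it. Fatou combined with the uniform bound \eqref{N-L2Bound} should settle it. A secondary point is the passage from pointwise limits of $G_t^{**}$ to a.e. limits of their right derivatives, but that is handled by Corollary \ref{DerivSwap} and its consequence Corollary \ref{sufficient-cor}.
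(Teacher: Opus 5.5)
Your proposal is correct and takes essentially the same route as the paper. The case $F(1)\neq 0$ is excluded by the linearly moving center of mass, necessity comes from Corollary~\ref{necessary-cor}, and sufficiency comes from Corollary~\ref{sufficient-cor} together with Proposition~\ref{uniform-integrable} and Remark~\ref{pointwise-to-L1}; your Fatou argument with \eqref{N-L2Bound} giving $N_\infty\in L^2(0,1)$ and your subsequence argument ruling out $L^1$ convergence make explicit two steps the paper leaves implicit.
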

    Further, for a solution given by $(N_t,V_t)$, the solution originating at $(N_{t_0},V_{t_0})$, $t_0>0$, will be identical to $(N_t,V_t)$, but shifted by $t_0$ in time. (In the literature, this is referred to as the semigroup property.) Thus, the bound \eqref{N-L2Bound} applies for any $t_0$, not just $0$, since if $(N_t,V_t)$ goes to equilibrium, so does the solution originating at $(N_{t_0},V_{t_0})$:
    \begin{cor}
        If the solution to \eqref{PE} given by $(N_0,V_0)$ goes to equilibrium, then $t\mapsto\|N_t\|_{L^2(0,1)}$ is nonincreasing; that is, the second moment of $\rho_t$ is decreasing in time. 
    \end{cor}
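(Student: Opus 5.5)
We prove the corollary by combining the semigroup property with the bound \eqref{N-L2Bound}, applied from an arbitrary starting time $t_0$ instead of $0$. Fix $0\le s<t$. The solution starting at $(N_s,V_s)$ is, by the semigroup property, the same as $(N_{s+\tau},V_{s+\tau})_{\tau\ge0}$. This shifted solution goes to equilibrium because the original one does: $N_{s+\tau}\to N_\infty$ in $L^1(0,1)$ as $\tau\to\infty$. By Theorem \ref{necessary-and-sufficient} applied to the data $(N_s,V_s)$, this data therefore satisfies \eqref{star}. The only point to check is that the zero-total-momentum normalization \eqref{0-total-momentum} carries over. This holds because total momentum is conserved, as shown in Proposition \ref{Galilean-Invariance}, so $\int_0^1 V_s\,dx=0$. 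Consequently the first half of \eqref{star} is automatic, and Theorem \ref{necessary-and-sufficient} supplies the nonnegativity of $x\mapsto\int_0^x V_s$.

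Given \eqref{star} for $(N_s,V_s)$, we have $\proj_\mathcal{K}V_s=0$. The contraction and positive homogeneity argument preceding \eqref{N-L2Bound} then gives
\begin{align*}
\|N_t\|_{L^2(0,1)}=\|\proj_\mathcal{K}(N_s+(t-s)V_s)\|_{L^2(0,1)}\le\|N_s\|_{L^2(0,1)}.
\end{align*}
The first equality is the projection formula (Theorem \ref{ProjectionFormula}) for the solution restarted at time $s$, together with the semigroup property. Since $s<t$ were arbitrary, $t\mapsto\|N_t\|_{L^2}$ is nonincreasing. Finally, $\|N_t\|_{L^2(0,1)}^2=\int y^2\rho_t(dy)$ because ${N_t}_\#\mathcal{L}=\rho_t$, which identifies this quantity with the second moment of $\rho_t$.

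The main obstacle is justifying the restart. We need $(N_s,V_s)$ to be admissible initial data, meaning $N_s\in L^2$ is nondecreasing and right-continuous and $V_s\in L^2(0,1)$. We also need the projection formula started from $(N_s,V_s)$ to reproduce $N_{s+\tau}$, that is, $\proj_\mathcal{K}(N_s+\tau V_s)=\proj_\mathcal{K}(N_0+(s+\tau)V_0)$. The paper asserts this as the semigroup property, and I would cite it from \cite{Natile-Savare} rather than reprove it.

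If we wanted to avoid invoking Theorem \ref{necessary-and-sufficient} for the restarted data, we could argue directly that $F_s(x)=\int_0^x V_s$ is nonnegative. Suppose instead that $F_s(x)<0$ for some $x$. Lemma \ref{divergence-of-projection} applied at time $s$ would then force $N_{s+\tau}(\xi)\to-\infty$ for some $\xi$. By Corollary \ref{necessary-cor} this contradicts convergence to equilibrium.
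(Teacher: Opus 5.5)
Your proposal is correct and follows the paper's own argument. You restart at time $s$ via the semigroup property and note that the restarted solution still converges to $N_\infty$, so Theorem \ref{necessary-and-sufficient} yields \eqref{star} for $(N_s,V_s)$; you then apply the contraction and positive-homogeneity bound \eqref{N-L2Bound} with $N_s$ in place of $N_0$. Your explicit check that zero total momentum persists, and your flagged reliance on the semigroup property from Natile--Savar\'e, only spell out what the paper leaves implicit.
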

    In \cite{Hynd-Tudorascu}, it is also observed that \eqref{star} follows in the case that $\{N_t\}_{t\geq0}$ is uniformly bounded in $L^1(0,1)$. Thus, solutions which do not converge to equilibrium spread out arbitrarily far. 

\begin{thm}[Characterization of Equilibrium]\label{characterization-of-equilibrium}
    Suppose $(N_t,V_t)$ captures the SPS to \eqref{PE} originating at $(N_0,V_0)$ satisfying \eqref{star}, and take $F$ as in \eqref{F-def}. Then, we have the following:
    \begin{enumerate}
        \item On intervals $[a,b)\subset[0,1]$ such that $F(a)=F(b)=0$ and $F>0$ on $(a,b)$, 
        \begin{align*}
            N_\infty(x)=\dashint_a^b N_0(w) \, dw. 
        \end{align*}
        \item On intervals $[c,d)\subset[0,1]$ with $F$ identically $0$, $N_t(x)=N_0(x)$ for all $t$, so $N_\infty(x)=N_0(x)$.
    \end{enumerate}
     
\end{thm}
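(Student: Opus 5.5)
Throughout, write $H(x)\coloneq\int_0^x N_0(w)\,dw$; it is convex because $N_0$ is nondecreasing. The proof rests on Corollary \ref{sufficient-cor}: under \eqref{star}, $N_\infty=\frac{d^+}{dx}G_\infty^{**}$ almost everywhere, where $G_\infty^{**}\coloneq\lim_{t\to\infty}G_t^{**}$ exists and is finite and convex by Lemma \ref{exist-finite-lemma}. So I will compute $G_\infty^{**}$ explicitly on each type of interval and then differentiate.

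\emph{Part 2 (interval where $F\equiv 0$).} On $[c,d)$ we have $G_t=H$ for every $t$. From \eqref{G-conv-bound}, $G_t^{**}\ge H$ on $[0,1]$, and always $G_t^{**}\le G_t$. Hence $G_t^{**}=H$ on $[c,d)$. Since $F(d)=0$ as well (by continuity of $F$), the same equality holds at $d$, so on all of $[c,d]$.
\begin{itemize}
\item For $x\in[c,d)$, the right derivative at $x$ only involves values on $[x,x+h]\subset[c,d]$ for small $h$. Therefore $N_t(x)=\frac{d^+}{dx}H(x)=N_0(x)$, using right-continuity of $N_0$ (the average $\dashint_x^{x+h}N_0\to N_0(x)$).
\item This holds for every $t$, not just in the limit, so $N_\infty=N_0$ on $[c,d)$.
\end{itemize}

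\emph{Part 1 (interval where $F>0$ inside).} Let $\ell$ be the chord of $H$ from $(a,H(a))$ to $(b,H(b))$; its slope is $\dashint_a^b N_0$.
\begin{enumerate}
\item \emph{Endpoints and upper bound.} By \eqref{G-conv-endpoints}, $G_t^{**}(a)=H(a)$ and $G_t^{**}(b)=H(b)$ for all $t$. By convexity, $G_t^{**}\le\ell$ on $[a,b]$. So $G_\infty^{**}\le \ell$ on $[a,b]$, with equality at $a$ and $b$.
\item \emph{Reduction.} Because $G_\infty^{**}$ is convex, it suffices to show $G_\infty^{**}\ge \ell$ on $(a,b)$. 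Then $G_\infty^{**}=\ell$ on $[a,b]$, and differentiating gives $N_\infty(x)=\dashint_a^bN_0$ for $x\in[a,b)$; the right derivative at $a$ also uses only values to the right of $a$.
\item \emph{Lower bound (the main obstacle).} The difficulty is that $G_t^{**}$ is a global convex envelope, so distant parts of $G_t$ might a priori pull it down inside $(a,b)$. The localization I will use: let $L_t$ be the convex envelope of $G_t$ restricted to $[a,b]$, and define the glued function $\Phi_t$ equal to $L_t$ on $[a,b]$ and to $H$ outside.
\begin{enumerate}
\item \emph{$\Phi_t\le G_t$.} This holds because $F\ge0$ everywhere.
\item \emph{$\Phi_t$ is convex.} The function $L_t$ agrees with $H$ at $a$ and $b$. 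Its one-sided slopes at the junctions dominate those of $H$: at $a$, $L_t\ge H$ on $[a,b]$ (as $H$ restricted there is convex and $H\le G_t$) with equality at $a$, so $d^+L_t(a)\ge d^+H(a)\ge d^-H(a)$. Symmetrically, $d^-L_t(b)\le d^-H(b)\le d^+H(b)$. So the slope stays nondecreasing across each junction, and gluing preserves convexity.
\item \emph{Conclusion of the comparison.} From (a) and (b), $G_t^{**}\ge\Phi_t=L_t$ on $[a,b]$.
\end{enumerate}
\item \emph{Limit of the local envelopes.} It remains to show $L_t\to\ell$ pointwise on $(a,b)$. Fix $x\in(a,b)$ and $\varepsilon>0$.
\begin{enumerate}
\item Choose a line $\lambda$ with $\lambda\le\ell-\varepsilon$ on the interior part of $[a,b]$ and passing just below $H(a)$ and $H(b)$. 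Concretely, take $\lambda=\ell-\eta$ for small $\eta$.
\item Near the endpoints, $G_t\ge H\ge\ell-\eta$ for $x$ within some $\delta$ of $a$ or $b$, by continuity of $H$ and $\ell$.
\item On $[a+\delta,b-\delta]$, $F\ge\min F>0$ by continuity and positivity. Hence $G_t\ge\ell-\eta$ there once $t$ is large.
\end{enumerate}
Thus $L_t\ge\ell-\eta$ on $[a,b]$ for large $t$, which gives $\liminf_t L_t\ge\ell$, the lower bound needed in step 2. Corollary \ref{DerivSwap} then justifies differentiating the limit almost everywhere. Finally, both monotone limits are right-continuous, so the identity holds for every $x\in[a,b)$.
\end{enumerate}
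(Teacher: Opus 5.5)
Your proof is correct, and Part 2 is essentially the paper's argument. The paper uses Lemma \ref{N-contained} to sandwich $N_t(x)$ between $N_0(x)$ and $N_0(x+\delta)$. You instead note directly that $G_t^{**}=H$ on $[c,d]$. The mechanism is the same: $G_t^{**}\ge H$, with equality on the zero set of $F$.

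For Part 1 your route genuinely differs from the paper's.
\begin{itemize}
\item \textbf{The paper's route.} It first passes to the extended-valued limit $G_\infty$, which is $+\infty$ off the zero set $\mathcal Z$ of $F$. It then builds one global convex minorant $\gamma$ (equal to $H$ on $\mathcal Z$, affine on each component of $[0,1]\setminus\mathcal Z$) to see that $\conv(G_\infty)$ is the chord on $[a,b]$. Finally it identifies $\lim_t G_t^{**}$ with $\conv(G_\infty)$ by appeal to Proposition \ref{pointwise-sup}.
\item \textbf{Your route.} You localize at each finite $t$. Your glued function $\Phi_t$ plays the role of $\gamma$, but it only needs the two junction slope checks.
\item \textbf{What your step 4 supplies.} The $\eta$-argument (continuity near the endpoints, $\min F>0$ on compact subintervals) is exactly the Dini-type compactness estimate needed to interchange $\lim_t$ and $\conv$. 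The paper attributes that interchange to Proposition \ref{pointwise-sup}, which on its own only gives convexity of the limit.
\item \textbf{What each buys.} The paper's construction describes $\lim_t G_t^{**}=\gamma$ on all of $[0,1]$ at once. Yours is rigorous at the interchange. As a byproduct, since also $G_t^{**}|_{[a,b]}\le L_t$, it gives $G_t^{**}=L_t$ on $[a,b]$ for every $t$. So the evolution on $[a,b)$ is decoupled from the rest of the mass at all times, which is the fact the time-of-collapse theorem uses implicitly.
\end{itemize}

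One correction to your final sentence: the pointwise limit $\lim_t N_t$ need not be right-continuous at $a$.
\begin{itemize}
\item On $(a,b)$ you do get convergence at every point from Corollary \ref{DerivSwap}, because $\ell$ is differentiable there.
\item At $x=a$ it can fail. Take $a>0$ and $F(a+h)=o(h)$ as $h\to0^+$. Then $N_0(a)\le N_t(a)\le \frac{d^+}{dx}G_t(a)=N_0(a)$ for all $t$, so $\lim_t N_t(a)=N_0(a)$. This is strictly below $\dashint_a^b N_0$ unless $N_0$ is a.e.\ constant on $[a,b)$.
\end{itemize}
The identity at $x=a$ therefore holds for the right-continuous representative of $N_\infty$ from Remark \ref{pointwise-to-L1}. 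The paper tacitly uses the same convention; it does not follow from right-continuity of the pointwise limit.
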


The first part of the above theorem should be thought of as giving the position where a particle (a Dirac measure) of mass $b-a$ forms (at the center of mass of that ``chunk of mass") as the solution approaches equilibrium. Note that although $a, b\in [0,1]$, this theorem applies to mass with unbounded support (i.e., it is possible that $N_0(0^+)=-\infty$ or $N_0(1^-)=+\infty$). The second part of the above theorem tells us which initial stationary mass stays stationary for all time. Since  $F$ is continuous the interval $[0,1)$ may always be written as a union of intervals for which one of the two above cases apply. Notably, then, the only possibilities for equilibrium are that mass in any given interval collapses to a Dirac measure, or it remains stationary for all time. 

\begin{proof}[Proof of Theorem \ref{characterization-of-equilibrium}]
Suppose \eqref{star} is satisfied and take $G_t$ as in \eqref{G-def}. By Corollary \ref{sufficient-cor}, 
$$N_\infty(x)=\frac{d^+}{dx}\lim_{t\to\infty}G_t^{**}(x).$$ 
Consider first an interval $[a,b)$ as described in the first part of the theorem. We may observe (similarly to the proof of Proposition \ref{motivation-prop}) that on the interval $[a,b]$, 
\begin{align*}
    G_\infty(x)\coloneq\lim_{t\to\infty}\Big[\int_0^x N_0(w) \, dw+tF(x)\Big]=\begin{cases}
        \displaystyle\int_0^a N_0(w) \, dw, &x=a
        \\
        \infty, &x\in(a,b)
        \\
        \displaystyle\int_0^b N_0(w) \, dw, &x=b, 
    \end{cases}
\end{align*} 
so the greatest convex function on $[a,b]$ bounding $G_\infty$ below (which by Proposition \ref{pointwise-sup} is equal to the limit of such bounding functions as $t\to\infty$\footnote{The limit is finite because $\conv (G_t)$ is bounded above by a line on $[0,1]$ for all $t$.}) is a line with slope (right derivative)
\begin{align*}
    \frac{1}{b-a}\Big(\int_0^b N_0(w) \, dw-\int_0^a N_0(w) \, dw\Big)=\dashint_a^b N_0(w) \, dw. 
\end{align*}
Call this line $\gamma_{[a,b]}:[a,b]\to\mathbb{R}$. Of course, $\conv(G_\infty)$ on $[0,1]$ may not be greater than $\gamma_{[a,b]}$ on $[a, b]$, so we need to show that there exists a convex function coinciding with $\gamma_{[a,b]}$ on $[a, b]$ which bounds $G_\infty$ below on $[0,1]$. $G_\infty\coloneq\lim_{t\to\infty}G_t=\infty$ anywhere where $F$ is nonzero; thus, in taking $G_\infty^{**}\coloneq\conv(G_\infty)$, \textit{we need only consider the zeros of} $F$. Let $\mathcal{Z}=\{x\in[0,1] \ | \ F(x)=0\}$. Take $\gamma:[0,1]\to\mathbb{R}$ to be equal to the primitive of $N_0$ on $\mathcal{Z}$; then, take it to be linear on each connected component of $[0,1]\backslash\mathcal{Z}$ so that it is continuous on $[0,1]$. (See figure \ref{gamma-construction}.) Since the primitive of $N_0$ is convex, as $N_0$ is increasing, one can observe that $\gamma$ is convex since its right derivative is nondecreasing. 

\begin{figure}
    \centering
    \begin{tikzpicture}[scale=3]
        \draw (-0.05*2,0)--(1*2,0);
        \draw (0,-0.35)--(0,1);

        \draw (0.333*2,-0.025)--(0.333*2,0.025);
        \draw (0.5*2,-0.025)--(0.5*2,0.025);

        \draw (0.333*2,0.08) node {$a$};
        \draw (0.5*2,0.08) node {$b$};

        \draw (0,1.3) node {$\vdots$};
        \draw (0,1.5) node {$\infty$};

        \draw (0.4*2,-0.17) node {$\scriptstyle{\gamma_{[a,b]}}$};
        \draw (0.8*2,0.5) node {$\gamma$};
        \draw (1.15*2,0.4) node {$x\mapsto\int_0^x N_0(m)dm$};
        \draw (0.7*2,1.6) node {$G_\infty$};

        \draw[fill] (0,0) circle [radius=0.5pt];
        \draw[fill] (0.2*2,-0.2173) circle [radius=0.5pt];
        \draw[fill] (0.333*2,-0.265) circle [radius=0.5pt];
        \draw[fill] (0.5*2,-0.20833) circle [radius=0.5pt];
        \draw[fill] (0.6*2,-0.108) circle [radius=0.5pt];
        \draw[fill] (0.75*2,0.14063) circle [radius=0.5pt];
        \draw[fill] (1*2,0.83333) circle [radius=0.5pt];

        \draw[thick] plot[smooth, variable =\x, domain=0:0.2*2] ({\x},{2*(\x/2)^2-(3/2)*(\x/2)+(1/3)*(\x/2)^3});
        \draw[thick] plot[smooth, variable =\x, domain=0.5*2:0.6*2] ({\x},{2*(\x/2)^2-(3/2)*(\x/2)+(1/3)*(\x/2)^3});
        
        \draw[thick] (0.2*2,1.5)--(0.5*2,1.5);
        \draw[thick] (0.6*2,1.5)--(1*2,1.5);

        \filldraw[fill=white, draw=black] (0.2*2,1.5) circle [radius=0.5pt];
        \filldraw[fill=white, draw=black] (0.3333*2,1.5) circle [radius=0.5pt];
        \filldraw[fill=white, draw=black] (0.5*2,1.5) circle [radius=0.5pt];
        \filldraw[fill=white, draw=black] (0.6*2,1.5) circle [radius=0.5pt];
        \filldraw[fill=white, draw=black] (0.75*2,1.5) circle [radius=0.5pt];
        \filldraw[fill=white, draw=black] (1*2,1.5) circle [radius=0.5pt];

        \draw[thin] (0.2*2,-0.2173)--(0.333*2,-0.265);
        \draw[thin] (0.333*2,-0.265)--(0.5*2,-0.20833);
        \draw[thin] (0.6*2,-0.108)--(0.75*2,0.14063);
        \draw[thin] (0.75*2,0.14063)--(1*2,0.83333);

        \draw[dashed] plot[smooth, variable =\x, domain=0:1*2] ({\x},{2*(\x/2)^2-(3/2)*(\x/2)+(1/3)*(\x/2)^3});
    \end{tikzpicture}
    \caption{Construction of the convex function $\gamma$.}
    \label{gamma-construction}
\end{figure}   

To show the second part, consider $[c,d)$ as described in the theorem, and let $x\in[c, d)$, $t>0$. By Lemma \ref{N-contained}, since $F$ is $0$ on $[x,x+\delta)$ for $\delta>0$ small enough, $N_t(x)\in[N_0(x), N_0(x+\delta)]$, from which the result follows by the right continuity of $N_0$. 
\end{proof}

\begin{thm}[Time of Collapse to Equilibrium]
    Suppose $(N_t,V_t)$ captures the SPS to \eqref{PE} originating at $(N_0,V_0)$, and discard the trivial case where $V_0=0$ a.e. Take $F$ as in \eqref{F-def} and let 
    \begin{align*}
        \mathcal I\coloneq\{[a,b]\subseteq [0,1] \ \ | \ \ F(a)=F(b)=0, \ F(x)>0\ \forall x\in(a, b)\}.
    \end{align*}
    If the solution goes to equilibrium, the (potentially infinite) time of collapse is given by 
    \begin{align}\label{time-to-collapse}
        T_c=\sup_{[a,b]\in\mathcal I}~\sup_{x\in (a,b)}\frac{\displaystyle\int_x^b N_0(w) \, dw \, + \, \frac{x-b}{b-a}\int_a^b N_0(w) \, dw}{\displaystyle\int_0^x V_0(w) \, dw}. 
    \end{align}
\end{thm}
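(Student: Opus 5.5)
Throughout, \eqref{star} holds, since the solution goes to equilibrium (Theorem \ref{necessary-and-sufficient}). By Theorem \ref{characterization-of-equilibrium}, on every $[c,d)$ where $F\equiv0$ we have $N_t=N_0=N_\infty$ for all $t$. So collapse happens exactly when, for each $[a,b]\in\mathcal I$, the map $N_t$ equals the constant $\dashint_a^b N_0$ on $[a,b)$. Writing $A(x)\coloneq\int_0^x N_0(w)\,dw$, I would reformulate this as $G_t^{**}=\gamma$ on $[a,b]$, where $\gamma$ is the convex function from the proof of Theorem \ref{characterization-of-equilibrium}. On $\mathcal Z$ it equals $A$, and on each $[a,b]\in\mathcal I$ it is the chord $\gamma_{[a,b]}$ joining $(a,A(a))$ to $(b,A(b))$.

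First I would show that $G_t^{**}=\gamma$ on all of $[0,1]$ if and only if $G_t\ge\gamma$ on $[0,1]$.
\begin{itemize}
\item Suppose $G_t\ge\gamma$. Since $\gamma$ is convex and lies below $G_t$, we get $\gamma\le G_t^{**}$. For the reverse inequality, note that by \eqref{G-conv-endpoints} we have $G_t^{**}=A=\gamma$ on $\mathcal Z$. Between consecutive points of $\mathcal Z$, convexity of $G_t^{**}$ forces $G_t^{**}\le$ the chord, which is $\gamma$.
\item Conversely, if $G_t^{**}=\gamma$, then $G_t\ge G_t^{**}=\gamma$.
\end{itemize}
Because right derivatives determine $N_t$, and $G_t^{**}$ agrees with $A$ on $\mathcal Z$, the identity $G_t^{**}=\gamma$ is equivalent to $N_t=N_\infty$ (right continuity handles endpoints). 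So $N_t=N_\infty$ if and only if $G_t\ge\gamma$.

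Next I compute this pointwise. For $x\in(a,b)$ with $[a,b]\in\mathcal I$,
\begin{align*}
G_t(x)-\gamma(x)=A(x)+tF(x)-A(b)-\frac{x-b}{b-a}\big(A(b)-A(a)\big)=tF(x)-\Big(\int_x^b N_0\,dw+\frac{x-b}{b-a}\int_a^b N_0\,dw\Big).
\end{align*}
Since $F(x)>0$ there, $G_t(x)\ge\gamma(x)$ if and only if $t\ge R(x)$, where $R(x)$ is the quotient in \eqref{time-to-collapse}. On $\mathcal Z$ the inequality $G_t\ge\gamma$ is an equality for every $t$. Also, $G_t$ is nondecreasing in $t$ by \eqref{star}. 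Hence the set of $t$ with $N_t=N_\infty$ is exactly $\{t\ge T_c\}$, with $T_c=\sup R$ taken over all such $x$ and intervals. If $T_c=\infty$, no finite time works, so the collapse time is infinite. If the sup is finite, note that $\{t: G_t\ge\gamma\}$ is closed, so it equals $[T_c,\infty)$ and $T_c$ is the minimal time. I would also remark that the numerator of $R$ is nonnegative, since $\int_x^b N_0\le (b-x)\dashint_a^b N_0$ by monotonicity of $N_0$. Thus $T_c\ge0$. Excluding $V_0=0$ a.e. keeps $\mathcal I$ nonempty, so the sup is not over an empty set.

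The main obstacle is the equivalence between $N_t=N_\infty$ a.e. and $G_t^{**}=\gamma$ everywhere. The two convex functions agree at $0$ and at all points of $\mathcal Z$, and I need equal right derivatives to force equality. I would argue via absolute continuity on compact subintervals (Proposition \ref{convex-properties}): equal right derivatives a.e. plus a common value at a point of $\mathcal Z$ give equality. Care is needed near the endpoints $0,1$ when $N_0$ is unbounded. There I would instead integrate from an interior zero, or from the endpoints themselves, using continuity of both functions on $[0,1]$.
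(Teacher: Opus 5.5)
Your proposal is correct and follows the paper's route: collapse on each $[a,b]\in\mathcal I$ reduces to the chord of $x\mapsto\int_0^x N_0$ over $[a,b]$ lying below $G_t$, and the resulting linear inequality $tF(x)\geq \int_x^b N_0\,dw+\frac{x-b}{b-a}\int_a^b N_0\,dw$ is then solved for $t$ pointwise. Your global function $\gamma$ and the a.e.-to-everywhere argument just make rigorous the ``if and only if'' step the paper asserts in one line. One small slip: monotonicity of $N_0$ gives $\int_x^b N_0\,dw\geq (b-x)\dashint_a^b N_0\,dw$ (the average over $[x,b]$ is at least the average over $[a,b]$), not $\leq$, and it is this direction that makes the numerator, hence $T_c$, nonnegative.
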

\begin{proof}
    $F(0)=F(1)=0$ and $F$ is continuous, so $[0,1]$ can be written as a union of intervals of the type $[a,b]$ where $F(a)=F(b)=0$, $F>0$ on $(a,b)$, and intervals on which $F$ is identically $0$. On intervals of the second type, the previous theorem tells us that $N_t=N_\infty$ for all $t$, so they do not contribute to the time of collapse. Thus, we need only consider intervals of the first type. Suppose $[a,b]$ is one such interval. By the previous theorem and the projection formula,  
    \begin{align*}
        \frac{d^+}{dx}G_t^{**}(x)=N_t(x) \ \to \ N_\infty(x)=\dashint_a^b N_0(w) \, dw \quad \text{on }[a,b]\text{ as }t\to\infty. 
    \end{align*}
    In other words, noting that $G_t(a)=\int_0^a N_0(w)dw$ and $G_t(b)=\int_0^b N_0(w)dw$ for all $t\geq0$, it is the only possibility that $N_t=N_\infty$ on this interval if and only if $t$ satisfies
    \begin{align*}
        \int_0^x N_0(w) \, dw+t\int_0^x V_0(w) \, dw=G_t(x)\geq\int_a^b N_0(w) \, dw\frac{x-b}{b-a}+\int_0^b N_0(w) \, dw 
    \end{align*}
    for all $x\in(a,b)$. For finite-time collapse to occur, that needs to be true for every $x$ in every such interval. Solving for $t$, we see that the time of collapse is given by \eqref{time-to-collapse}. \qedhere 
    



\end{proof}

\subsection{The case $p>1$}\label{p>1}
    As stated in Definition \ref{equilibrium-def}, our results apply to convergence in the sense of the $1$-Wasserstein distance. The arguments in this paper imply that if \eqref{star} is not satisfied, $\rho_t$ does not converge to equilibrium in the Wasserstein sense for any $p$. 

    Relevant here are the results in \cite{Hynd-Tudorascu}, which concern solutions confined to a compact set for all time. Suppose $\rho_t$ is supported in $[a,b]$ regardless of $t\geq0$. If $W_1(\rho_t,\rho_\infty)\to0$ as $t\to\infty$, we can observe that for all $t$, $a\leq\inf N_t\leq\sup N_t\leq b$, so the bound 
    \begin{align*}
        W_p(\rho_t,\rho_\infty)&=\Big(\int_0^1|N_t-N_\infty|^p\,dx\Big)^{1/p}\leq\Big(\int_0^1|N_t-N_\infty||b-a|^{p-1}\,dx\Big)^{1/p}
        \\
        &=|b-a|^{\frac{p-1}{p}}\big(W_1(\rho_t,\rho_\infty)\big)^{1/p}
    \end{align*}
    gives us that $W_p(\rho_t,\rho_\infty)\to0$ for any $p\in(0,\infty)$. If $\spt\rho_0\subseteq[a,b]$ and \eqref{star} is satisfied, observe that Lemma \ref{N-contained} applied to the entire interval tells us that $\spt\rho_t\subseteq[a,b]$ for all $t>0$. 

    Our results can be extended to $p\in(1,2)$ by an application of Hölder's inequality and \eqref{N-L2Bound}, if one desires. Of course, it would be particularly desirable to extend the results to $p=2$. We are not currently aware of an example for which $W_1$ convergence occurs but $W_2$ convergence does not. However, it is possible that some solutions, while maintaining finite second moment and finite total momentum, remain ``spread out" enough that $N_t$ behaves pathologically near $0$ or $1$. 
    
    If a solution captured by $(N_t,V_t)$ satisfies \eqref{star}, convergence in the $2$-Wasserstein sense can be obtained in the case that there exists a $\delta>0$ so that $t\mapsto N_t(x)$ is monotone increasing for $x\in(0,\delta]$ and monotone decreasing for $x\in[1-\delta,1)$. Physically, this means that, for all time, there is no outward velocity outside of some fixed compact set. In this case, since $N_t$ is nondecreasing for any $t$, we obtain the following bounds on $N_t(x)$. 
    \begin{align*}
        N_0(\delta)\leq N_t(\delta)\leq N_t(x)\leq N_t(1-\delta)\leq N_0(1-\delta), \quad &x\in[\delta,1-\delta],
        \\
        N_0(x)\leq N_t(x)\leq N_t(1-\delta)\leq N_0(1-\delta), \quad &x\in(0,\delta],
        \\
        N_0(\delta)\leq N_t(\delta)\leq N_t(x)\leq N_0(x), \quad &x\in[1-\delta,1).
    \end{align*}
    As a consequence, $|N_t(x)|\leq H(x)\coloneq\max\{|N_0(x)|,|N_0(\delta)|,|N_0(1-\delta)|\}$ on $(0,1)$. Since $N_0$ is nondecreasing and square-integrable, $H\in L^2(0,1)$. Thus, we have $\|N_t-N_\infty\|_{L^2(0,1)}$ as $t\to\infty$ by the $L^2$ version of the Dominated Convergence Theorem. Of course, more meaningful would be a result depending only on the initial condition $(N_0,V_0)$. This is difficult because convergence in $L^2(0,1)$ depends on the behavior of $N_t$ near $0$ and $1$, and yet, any analysis of the projection formula must consider global properties of $N_0+tV_0$ when taking the convex envelope. Finally, we note that this condition is not necessary for convergence in $W_2$. We conclude with an example of a solution not satisfying the above condition, but which converges in $W_p$ for all $1\leq p\leq\infty$. 
    \begin{example}
        Consider the initial conditions given by 
        \begin{align}\label{example2-initial-condition}
            \begin{cases}
                N_0=\displaystyle\sum_{n=0}^\infty\big[2n\chi_{[1-2^{-n},1-3\cdot2^{-2-n})}+(2n+1)\chi_{[1-3\cdot2^{-2-n},1-2^{-1-n})}\big],
                \\
                V_0=\displaystyle\sum_{n=0}^\infty\big[\frac{1}{2}\chi_{[1-2^{-n},1-3\cdot2^{-2-n})}-\frac{1}{2}\chi_{[1-3\cdot2^{-2-n},1-2^{-1-n})}\big]. 
            \end{cases}
        \end{align}
        One may verify that $\|N_0\|_{L^2(0,1)}=\sum_{n=0}^\infty 2^{-n}\big(2n^2+n+1/4\big)<\infty$ and $\quad \|V_0\|_{L^2(0,1)}=1/2$, and that the solution originating at \eqref{example2-initial-condition} to \eqref{PE} is captured by 
        \begin{align*}
                N_t=\sum_{n=0}^\infty\big[(2n+\frac{t}{2})\chi_{[1-2^{-n},1-3\cdot2^{-2-n})}+(2n+1-\frac{t}{2})\chi_{[1-3\cdot2^{-2-n},1-2^{-1-n})}\big] \quad (0<t<1),
        \end{align*}
        $V_t=V_0$ $(0<t<1)$, and that, for $t>1$, 
        \begin{align*}
            N_\infty=N_t=N_1=\sum_{n=0}^\infty(2n+\frac{1}{2})\chi_{[1-2^{-n},1-2^{-1-n})}, \quad V_t=0.
        \end{align*}
        Thus, for $1\leq p\leq\infty$, $\rho_t={N_t}_\#\mathcal{L}$, $\rho_\infty={N_\infty}_\#\mathcal{L}$, and $0\leq t\leq1$, we compute 
        \begin{align*}
            W_p(\rho_t,\rho_\infty)&=\|N_t-N_\infty\|_{L^p(0,1)}
            =\|\frac{t}{2}-\frac{1}{2}\|_{L^p(0,1)}=\frac{1}{2}-\frac{t}{2}. 
        \end{align*}
    \end{example}

\section{Acknowledgments}
    We would like to emphatically and sincerely thank Professor Adrian Tudorascu, our mentor in a related project on the repulsive pressureless Euler-Poisson system, and Jack Curtis, who worked with us invaluably on the same project. The ideas present in this paper were originally intended for the repulsive scheme, and after they were unsuccessful there we realized they could apply to this case. We would also like to thank Professor Tsikkou and the REU Site in Undergraduate Research in Applied Analysis at West Virginia University, where these ideas were initially generated. We would like to thank the National Science Foundation (DMS-2349040), and the Department of Mathematics and Statistics for their support and hosting of the REU. 


\end{document}